\theoremstyle{plain}
\newtheorem*{theorem*}{Theorem}
\newtheorem{theorem}{Theorem}[section]
\newtheorem{lemma}[theorem]{Lemma}
\theoremstyle{definition}
\newtheorem{definition}[theorem]{Definition}
\newcommand{\R}{\mathbb{ R}}
\newcommand{\C}{\mathbb{ C}}
\newcommand{\Z}{\mathbb{ Z}}
\renewcommand{\P}{\mathbb{ P}}
\DeclareMathOperator{\Gr}{Gr}
\DeclareMathOperator{\Res}{Res}
\begin{document}
\title
[Spectral data for CMC tori]
{The Closure of Spectral Data for Constant Mean Curvature Tori in $ S ^ 3 $}

\author{Emma Carberry}
\email{emma.carberry@sydney.edu.au}
\address{Emma Carberry\\School of Mathematics and Statistics F07\\University of Sydney\\NSW 2006, Australia}
\author{Martin Ulrich Schmidt}
\email{schmidt@math.uni-mannheim.de}
\address{Martin Ulrich Schmidt\\Lehrstuhl f\"ur Mathematik III\\
Universit\"at Mannheim\\
D-68131 Mannheim, Germany}

\date{\today}

\maketitle

\begin {abstract}
The spectral curve correspondence for finite-type solutions of the sinh-Gordon equation describes how they arise from and give rise to  hyperelliptic curves with a real structure. Constant mean curvature (CMC) 2-tori in $ S ^ 3 $ result when these spectral curves  satisfy periodicity conditions. We prove that the spectral curves of CMC tori are dense in the space of smooth spectral curves of finite-type solutions of the sinh-Gordon equation. One consequence of this is the existence of countably many real $ n $-dimensional  families of CMC tori in $ S ^ 3 $ for each positive $ n $.
 \end {abstract}

\section {Introduction}
The study of 
constant mean curvature (CMC) 
surfaces has received considerable impetus over the last few decades with the realisation that 
they can be described by a loop of flat connections.
 In particular, the explicit bijective correspondence 
 \cite{PS:89, Hitchin:90, Bobenko:91} between conformally immersed CMC tori in 3-dimensional space forms and spectral curve data makes it possible to apply complex-algebraic techniques to the theory of these differential-geometric objects.



This algebraic viewpoint is clearly advantageous when considering  moduli space questions. Indeed spectral curves have been utilised to show that CMC tori in $\R ^ 3 $ , minimal tori in $ S ^ 3 $ and minimal Lagrangian tori in $\C P ^2 $ come in families of every real dimension (except dimension one in the first example) \cite { EKT:93, Jaggy:94,Carberry:04, CM:03}.
In each of these cases the  existence of a family of dimension $ g $ corresponds to the existence of a spectral curve of genus $ g $ and the impediment to the existence of such curves is periodicity conditions which the authors show can be satisfied for every $ g $. The algebro-geometric result proven by these authors is that for each $ g $ there is a curve $ Y $ of arithmetic genus $ g $ but geometric genus $ 0 $ such that in a neighbourhood of $ Y $, spectral curves of tori are dense in the space of spectral curves of finite type immersions of the complex plane. For CMC tori in $ S ^ 3 $, we show that for every $ g $, the density of curves satisfying periodicity conditions holds {\em globally}, not merely in the neighbourhood of a rational curve. 


The spectral curve viewpoint has proven fruitful in the study of CMC tori as by encoding geometric information in the algebraic data one obtains a new toolbox with which to prove theorems.
For example, Hitchin expressed the energy of a harmonic map into $ S ^ 3 $ explicitly in terms of spectral data  \cite 
 {Hitchin:90}, Ferus, Leschke, Pedit and Pinkall gave a lower bound for the area of CMC tori in $\R ^ 3 $ and Haskins bounded the 
geometric complexity of a special Lagrangian $ T ^ 2 $-cone \cite{Haskins:04} in terms of the genus of the respective spectral curves.
Kilian, the second author and Schmitt showed \cite{KSS:10} that given a spectral curve of a CMC torus in $ S ^ 3 $ of genus $ g $, it admits a one-parameter family of deformations. They gave a detailed analysis of this family of deformations in the case of equivariant CMC tori (which have spectral genus 0 or 1) and used this viewpoint to classify the equivariant minimal, embedded and Alexandrov embedded CMC tori. These results yield a partial version of Lawson's conjecture, namely that the Clifford torus is the only embedded minimal equivariant torus in $ S ^ 3 $. The preprint \cite {KS:08} by Kilian and the second author employs a related approach to prove the Pinkall-Sterling conjecture that the only embedded CMC tori in $ S ^  3 $ are surfaces of revolution and hence in particular the Lawson conjecture that the only embedded minimal torus in $ S ^ 3 $ is the Clifford torus.


CMC tori in $ S ^ 3 $ which are flat necessarily have spectral genus zero and such examples are known explicitly, each value of the mean curvature $ H $ yielding a unique embedded example up to isogeny.
The complete classification of all one-dimensional families of spectral genus one tori given in  \cite [Theorem 4.5] {KSS:10} a fortiori  demonstrates the existence of CMC tori of spectral genus one. For higher spectral genus the existence problem is solved by our density result. As a corollary of this result we conclude that for every $ n >0 $, there exist countably many real families of CMC tori in $ S ^ 3 $ of dimension $ n $.


A CMC surface admits a conformal parameterisation by curvature line coordinates in any neighbourhood of a non--umbilic point, and writing the conformal factor as $4e ^ u $ \cite[(14.3)]{Bobenko:91}, the function $ u $ satisfies the elliptic sinh-Gordon equation \cite[(14.6)]{Bobenko:91} \begin {equation}\label {eq:sG} u_{z\bar z} +2(1+H^2)e^u-\tfrac{1}{2}e^{-u} = 0, \end {equation} where $ z$ denotes a local complex curvature line coordinate. Conversely,  a solution of  \eqref {eq:sG} in a simply-connected domain determines a conformal CMC immersion, up to isometry. Thus on a simply-connected domain, a CMC immersion into $ S ^ 3 $ without umbilic points is equivalent to a solution of the sinh-Gordon equation. Umbilic points correspond to zeroes of the Hopf differential, which is the holomorphic quadratic differential given by the $ (2, 0) $ part of the second fundamental form. Hence aside from a totally umbilic surface (which must be a sphere), umbilic points are isolated and a CMC immersion of the plane which is periodic with respect to some lattice has no umbilic points. It is not true however that doubly periodic solutions of the sinh-Gordon equation must yield CMC immersions of tori, the corresponding surface could instead have translational symmetry.

The spectral data consists of a hyperelliptic curve $ X $ together with a meromorphic function $\lambda $ and line bundle $ L_0 $ on $ X $ and a pair of Sym points $\lambda_1\neq \lambda_2\in S ^ 1$. The periodicity of the CMC immersion with respect to some lattice $\Lambda\subset\C$ corresponds to the spectral curve satisfying periodicity conditions. If these conditions are satisfied then any line bundle satisfying reality conditions can be chosen and yields a unique CMC torus; the available space of line bundles has real dimension $ g $ and hence together with the 1-parameter family of deformations found in \cite{KSS:10}, for a spectral curve of genus $ g $, the corresponding CMC tori come in families of dimension $ g +1 $. The periodicity requirements can be geometrically described in terms of the singular curve $\tilde X$ formed from $ X $ by identifying together each of the pairs of points defined by the Sym points $\lambda_1,\lambda_2 $. The periodicity conditions are that the tangent plane to the Abel-Jacobi image of $\tilde X $ with base point $\lambda ^{-1}(0) $ is a rational plane in  $ \mathrm{Jac}(\tilde X) $. We may view this periodicity as coming in two parts: the requirement that the tangent plane to the Abel-Jacobi image of $ X $ be rational and the additional condition that this is true also for $\tilde X $. The first of these corresponds to the solution to the sinh-Gordon equation being doubly periodic, the second says additionally that the corresponding CMC surface is a torus.

The mathematical statement that we prove then is that amongst smooth hyperelliptic curves of any genus $ g $ satisfying the appropriate reality conditions, those which also satisfy both types of periodicity conditions form a dense subset. This is the content of Theorem~\ref {theorem:sphere}. As explained above the geometric interpretation of this is that in the space of (spectral curves of) CMC immersions of the plane into $ S ^ 3 $ of finite-type, the doubly periodic immersions form a dense subset.

We now briefly describe how this result is obtained. Periodicity corresponds to the tangent plane to the Abel-Jacobi image of the spectral curve being rational and rational planes are clearly dense in the appropriate Grassmanian. It hence suffices to show that the differential of the map which associates this plane to the spectral curve is generically invertible. To accomplish this, we consider polynomials $ b_1 $ and $ b_2 $ which geometrically correspond to choosing linearly independent vectors in the plane described above. In Lemma~\ref {lemma:isomorphism} we show that if this plane satisfies a certain pair of restrictions then the desired differential is invertible, as required. In the remainder of the manuscript we prove that for each genus $ g $ the set $\mathcal R ^ g $ of spectral curves satisfying these two restrictions is dense in the space $\mathcal H ^ g $ of all spectral curves of finite-type solutions to the sinh-Gordon equation.
We consider the pair of conditions separately, the space $\mathcal H ^ g\setminus\mathcal S ^ g $ of spectral curves satisfying just that the polynomials $ b_1 $ and $ b_2 $ have no common roots is the complement of an analytic subvariety and hence is either empty or dense in the space of finite-type spectral curves. 
We show by induction that for each $ g $ this space is indeed non-empty. Given an algebraic curve of genus $ g $ satisfying the appropriate reality conditions and with an ordinary double point on the unit circle,  in Lemma~\ref {lemma:polynomialperturbation} we ``open up" the singularity by adding a handle, obtaining a family of spectral curves of genus $ g +1 $.
The  meromorphic function $ f = b_1/b_2 $ on the spectral  curve is intrinsically defined up to M\"obius transformations and as shown in Lemma~\ref {lemma:induction} this process increases the degree of $ f = b_1/b_2 $ by one, which provides the necessary induction step for the behaviour of the common roots. We then use this in Lemma~\ref {lemma:dense} to prove that  the set $\mathcal R ^ g $ of spectral curve satisfying both conditions is also either empty or dense in $\mathcal H ^ g $. In  Lemma~\ref {lemma:induction 2} we calculate how the second condition defining $\mathcal R ^ g $ behaves under the process of adding a handle and we employ this in Lemma~\ref{lemma:non empty}  to show that for each genus $ g $, the space $\mathcal R ^ g $ is  non-empty.

In section 2, we explain how CMC tori in $ S ^ 3 $ are characterised in terms of spectral data and in section 3 we prove our main density result, Theorem~\ref {theorem:sphere}.

We expect similar statements to hold for spectral data of tori with other geometric properties. For example in the case of CMC tori in Euclidean three-space, the periodicity conditions force the polynomials $b_1$ and $b_2$ to have a common root at the Sym point on $S^1$. 
We expect that the closure of smooth spectral curves of CMC tori in Euclidean three-space is the subset of all smooth hyperelliptic curves of any genus $ g $ satisfying the appropriate reality conditions such that $b_1$ and $b_2$ have one common root on $S^1$.

\section {Spectral Curve Approach}
We begin by explaining the integrable systems approach to CMC tori in
the 3-sphere and the contruction of such tori with the
help of spectral curves.

For any conformal map from $\R^2$ to
$S^3\subset\R^4$ the position vector together with the normalised
partial derivatives and the normal vector to the tangent plane in
$S^3$ form an orthonormal oriented basis of $\R^4$, i.e. a
map from $\R^2$ to $SO(4,\R)$, which is called a frame. This frame can be
lifted to a map to $SU(2,\C)\times SU(2,\C)$, which is a two-sheeted
covering of $SO(4,\R)$. The Maurer-Cartan
form of this frame can be expressed in terms of the first and second
fundamental forms. For conformal constant mean curvature immersions
the Maurer-Cartan equation is equaivalent to the $\sinh$-Gordon
equation. In \cite{Bobenko:91} the integrable structure of this
equation was used to construct a family of $su(2,\C)$-valued
solutions of the Maurer-Cartan equation parameterized by
$\lambda\in S^1$, such that the Maurer-Cartan form of the frame is the
Cartesian product of this family evaluated at two values
$\lambda_1\neq\lambda_2\in S^1$. Furthermore, this family extends
holomorphically to a $sl(2,\C)$-valued Maurer-Cartan form depending on
$\lambda\in\C^{\times}$. If $F_{\lambda}$ is a
$\lambda$-dependent family of maps to $SL(2,\C)$, whose derivative is
this family of Maurer-Cartan forms, then the immersion is up to an
isometry of $S^3$ equal to $\psi=F_{\lambda_1}^{-1}F_{\lambda_2}$
(\cite[(14.11)]{Bobenko:91}.

In \cite{PS:89, Hitchin:90} it was shown that all doubly periodic
solutions on $\R^2$ of the $\sinh$-Gordon equation are of
finite type. In particular all such solutions give rise to a
hyperelliptic curve called the spectral curve and a holomorphic
line bundle $L_0$ on this curve. In \cite{Bobenko:91} the theory of
integrable systems was applied to the $\sinh$-Gordon equation for the case
of non-singular spectral curves of finite genus. In this case the corresponding
Baker-Akhiezer functions and the corresponding solutions of the
$\sinh$-Gordon equation can be expressed in terms of theta
functions of the non-singular spectral curve. Since the Baker-Akhiezer
function defines an integral $F_\lambda$ of the $\lambda$-dependent
Maurer-Cartan form, the immersion can be expressed in terms of
theta functions of the spectral curve
\cite[(16.4)]{Bobenko:91}. Furthermore, these immersions are
doubly periodic if and only if the spectral curves obey certain
periodicity conditions \cite[(16.5)]{Bobenko:91}. These conditions
extend to singular spectral curves.

From the information of the spectral curves, the map $\lambda $, Sym
points $\lambda_1,\lambda_2 $ and a line bundle $ L_0 $, the CMC
immersion $ \psi $ can be recovered. The spectral curve satisfies
reality conditions which are detailed in the next section, along with
the periodicity conditions. Given an appropriate curve, the line
bundle can be chosen from a real $ g $-dimensional family.

\section{Spectral curves of constant mean curvature tori in $ S ^ 3 $}
We shall prove that the set of spectral curves of constant mean curvature (CMC) 2-tori in $ S ^ 3 $ is dense in the space of smooth spectral curves of finite-type solutions of the sinh-Gordon equation.

The spectral curves of finite type solutions of the sinh-Gordon equation are hyperelliptic curves $ X_a $ given by 
 $ y ^ 2 =\lambda a (\lambda) $ 
where $ a $ is a polynomial of degree $ 2g $ satisfying the reality condition
\begin {equation}
\lambda ^ {2 g}\overline {a(\bar\lambda^{-1})}=a(\lambda)\label{eq:reality}
\end{equation}
and the positivity condition
\begin {equation}
\lambda ^ {-g} a (\lambda) >0\quad\text {for all}\quad\lambda\in S ^ 1. \label {eq:positivity}
\end {equation}
We denote by $\mathcal H ^ g $ the set of all degree $ 2g $ polynomials $ a $  satisfying  \eqref {eq:reality} and \eqref {eq:positivity} whose roots are pairwise distinct and whose highest coefficient has absolute value one. The condition on the roots ensures that the resulting hyperelliptic curve is smooth and irreducible, whilst the last condition is a normalisation. These polynomials are uniquely determined by their roots. Writing
\[
a (\lambda) =
(-1) ^ g\prod_{j = 1} ^ g\frac {\bar\eta_j} {\left|\eta_j\right|} (\lambda -\eta_j) (\lambda -\bar\eta_j^ {- 1}),
\]
each $ a\in
\mathcal H ^ {g} $ corresponds to $ g $ pairwise distinct elements $\{\eta_1,\ldots,\eta_g\} $ of $\{z\in\C\mid 0 <\left| z\right| <1\}$, \label {Hg}
and we take on $\mathcal H ^ g $ the metric induced by this by bijection.

For each $ a\in \mathcal{H} ^ g $, define an antiholomorphic involution
\begin {align*}
 \rho: X_a &\rightarrow X_a,&
(\lambda, y) &\mapsto \Bigl(\bar\lambda^{-1}, \;\frac {\bar y} {\bar\lambda ^ {(g +1)}}\Bigr),
\end {align*}
and note that $\rho $ fixes the points on $ X_a $ with $\lambda\in S ^ 1 $.

Each hyperelliptic curve $ X_a $ with $ a\in\mathcal H^ g $ is the spectral curve of a real $ g $-dimensional family of finite-type solutions to the sinh-Gordon equation, where these solutions are parameterised by line bundles of degree $ g +1 $ on $X_a$ which are quaternionic with respect to the antiholomorphic involution $\sigma\rho $.
The corresponding solutions of the sinh-Gordon equation are doubly periodic if and only if there exist functions $\mu_1 $ and $\mu_2 $ on the spectral curve which transform under the hyperelliptic involution $\sigma $ according to $\sigma ^*\mu_1 =\frac {1} {\mu_1},\;\sigma ^*\mu_2 =\frac {1} {\mu_2} $ and have differentials of the form
\begin{align}\label{eq:diff}
dq_1=d\log\mu_1&=\frac {b_1 (\lambda)} {y}\frac {d\lambda} {\lambda},&
dq_2=d\log\mu_2&=\frac {b_2 (\lambda)} {y}\frac {d\lambda} {\lambda},
\end{align}
where $ b_1, b_2 $ are $\R $-linearly independent polynomials of degree $ g +1 $, satisfying
\begin {equation}
\lambda ^ {g +1}\overline {b_1 (\bar\lambda^{-1})} = b_1 (\lambda),\quad \lambda ^ {g +1}\overline {b_2 (\bar\lambda^{-1})} = b_2 (\lambda).\label {eq:breality}
\end{equation}
For each $a\in\mathcal H ^ g $, let $ \mathcal B_a $ be the space of polynomials $ b $ of degree $ g +1 $ satisfying the reality condition $\lambda ^ {g +1}\overline {b(\bar\lambda^{-1})} = b (\lambda) $ and such that the differential form $\Theta_b:=\frac {b (\lambda)} {y}\frac {d\lambda} {\lambda} $ has purely imaginary periods. The reality condition on $ b $ is equivalent to
\begin{equation}
\overline {\rho ^*(\Theta_b)} =-\Theta_b.\label{eq:Thetareality}
\end{equation}

We may choose a canonical basis $ A_1,\ldots, A_g, B_1,\ldots, B_g $ for the homology of $ X_a $ such that $\rho_*(A_j) \equiv - A_j $,  $\rho_*(B_j) \equiv B_j\;\mathrm{mod}\; \langle A_1, \ldots , A_g \rangle $ and the  projection of each $ A_j $ to $\C\P ^ 1 $ has winding number one around  $\eta_j,{\bar\eta_j}^ {- 1} $ and winding number zero around the other roots of $ a $. We then call $ A_1, \ldots , A_g$, $B_1,\ldots, B_g $ an {\it adapted canonical basis}. The condition on periods clearly imposes $ g $ linearly independent constraints $\int_{A_j}\frac {b (\lambda)} {y}\frac {d\lambda} {\lambda} = 0 $ on the $ (g +2) $-dimensional space of polynomials of degree $ g +1 $ satisfying the reality condition, so we see that $ \mathcal B_a $ has real dimension two. When the spectral curve corresponds to a doubly periodic solution of the sinh-Gordon equation, $ \mathcal B_a $ is  spanned by the polynomials $ b_1, b_2 $ above, whose periods lie in $ 2\pi i\Z $.

A spectral curve of a doubly periodic solution of the sinh-Gordon equation is the spectral curve of a CMC torus in $ S ^ 3 $ if and only if there exist distinct $\lambda_1,\lambda_2\in S ^ 1 $ such that
\[
\mu_1 (\lambda_1) =\mu_1 (\lambda_2) =\pm 1\quad\mbox{and}\quad \mu_2 (\lambda_1) =\mu_2 (\lambda_2) =\pm 1.
\]
For each $\lambda_1\neq\lambda_2\in S ^ 1 $, denote by $\mathcal H ^ g (\lambda_1,\lambda_2) $ the space of all $ a\in\mathcal H ^ g $ such that $ (a,\lambda_1,\lambda_2) $ are the spectral data of a family of CMC tori in $ S ^ 3 $.

\begin {theorem} \label{theorem:sphere}
For each $\lambda_1\neq\lambda_2\in S ^ 1 $, the space $\mathcal H ^ g (\lambda_1,\lambda_2)$ is a dense subspace of $\mathcal H ^ g $.
\end {theorem}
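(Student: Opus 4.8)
The plan is to reformulate the CMC-torus condition geometrically and then realise it as the preimage of a dense set under a map with invertible differential. As explained in the introduction, the requirement that $(a,\lambda_1,\lambda_2)$ be the data of a CMC torus is equivalent to the statement that the tangent plane at $\lambda^{-1}(0)$ to the Abel-Jacobi image of the singular curve $\tilde X$ --- obtained from $X_a$ by identifying together the pairs of points determined by the Sym points $\lambda_1,\lambda_2$ --- is a \emph{rational} plane in $\mathrm{Jac}(\tilde X)$. Rational planes are dense in the relevant Grassmannian, so it suffices to produce, on a dense subset of $\mathcal H^g$, a map to this Grassmannian whose differential is invertible; density of $\mathcal H^g(\lambda_1,\lambda_2)$ then follows by pulling back the dense family of rational planes through the resulting local diffeomorphisms.

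First I would make this map explicit. For $a\in\mathcal H^g$ the two-dimensional space $\mathcal B_a$ has a basis $b_1,b_2$ whose differentials $\Theta_{b_1},\Theta_{b_2}$ span the tangent plane in question, so that $a\mapsto\Span\{[\Theta_{b_1}],[\Theta_{b_2}]\}$ defines a map $\Phi$ from $\mathcal H^g$ to the Grassmannian, and the preimage under $\Phi$ of the set of rational planes is exactly $\mathcal H^g(\lambda_1,\lambda_2)$. The core analytic input is that the differential of $\Phi$ is invertible at every $a$ lying in the subset $\mathcal R^g\subset\mathcal H^g$ cut out by two restrictions: that $b_1$ and $b_2$ have no common root, and a second non-degeneracy condition on the associated plane. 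This is the isomorphism lemma announced in the introduction; granting it, $\Phi$ restricts to a local diffeomorphism on $\mathcal R^g$, and the theorem reduces to proving that $\mathcal R^g$ is dense in $\mathcal H^g$.

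Density of $\mathcal R^g$ I would establish in two stages. Each of its defining restrictions fails only on an analytic subvariety --- for the first, the locus $\mathcal S^g$ on which $b_1$ and $b_2$ share a root --- so $\mathcal R^g$ is either empty or dense, and it remains to prove non-emptiness for every $g$. I would argue by induction on $g$: beginning from a low-genus curve carrying an ordinary double point on $S^1$, one ``opens up'' that singularity by adding a handle, producing a family of smooth genus-$(g+1)$ curves in $\mathcal H^{g+1}$. Two book-keeping lemmas then drive the induction. The first shows that adding a handle raises the degree of the M\"obius-invariant function $f=b_1/b_2$ by exactly one, so that the no-common-root condition is inherited; the second computes how the remaining non-degeneracy condition transforms under the same operation. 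Together they yield non-emptiness, hence density, of $\mathcal R^g$ at every genus.

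The main obstacle is the isomorphism lemma, i.e. showing that the differential of $\Phi$ is an isomorphism on $\mathcal R^g$. This requires relating an infinitesimal deformation of the curve --- a variation of the roots $\eta_j$ of $a$ --- to the induced infinitesimal change of the spanning differentials and their periods, and recognising that precisely the two restrictions defining $\mathcal R^g$ force the resulting linear map between equidimensional real vector spaces to be injective. A secondary difficulty is the careful tracking of periods and of the reality and positivity structure through the handle-adding degeneration, where the curve acquires a node: one must check that the perturbed curves remain in $\mathcal H^{g+1}$ and that the relevant quantities vary continuously and with the claimed multiplicity.
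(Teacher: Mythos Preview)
Your proposal is correct and follows essentially the same architecture as the paper's proof: you define the map $\Phi$ to the Grassmannian of $2$-planes in $\mathbb{R}^{g+2}$, reduce the theorem to the density of rational planes together with invertibility of $d\Phi$ on a dense set $\mathcal{R}^g$, and then establish density of $\mathcal{R}^g$ by the two-stage argument (complement of the analytic subvariety $\mathcal{S}^g$, then non-emptiness via the handle-adding induction controlling both $\deg f$ and the residue quantity). The only cosmetic difference is that the paper makes the target explicit by writing $\varphi^{(a,\lambda_1,\lambda_2)}(b)$ as the vector of integrals over $B_1,\dots,B_g,\gamma_1,\gamma_2$, whereas you phrase it via the Abel--Jacobi tangent plane of the nodal curve $\tilde X$; these are the same object.
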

\begin {proof}

Take an adapted canonical basis $ A_1,\ldots, A_g, B_1,\ldots, B_g $ for the homology of $ X_a $.
For $a\in\mathcal H ^ g $ and $\lambda_1\neq\lambda_2\in S ^ 1 $, choose paths $ \gamma_1, \gamma_2 $ in $ X_a $ such that for each $ j = 1, 2 $, the path $ \gamma_j $ connects the two points of $ X_a $ over $\lambda_j $ and $ \rho_*(\gamma_j) \equiv\gamma_j\;\mathrm{mod}\; \langle A_1, \ldots , A_g \rangle $. Then we consider
\begin {align*}
\varphi ^ {(a,\lambda_1,\lambda_2)}:  \mathcal B_a &\rightarrow \R ^ {g +2},&
 b &\mapsto \frac {1} {2\pi i}\left (
\int_{B_1}\Theta_b,\;\;\ldots,\;\;\int_{B_g}\Theta_b,\;
\int_{\gamma_1}\Theta_b,\;\int_{\gamma_2}\Theta_b \right).
\end {align*}
The above periodicity conditions tell us that $ (a,\lambda_1,\lambda_2) $ corresponds to a CMC 
 torus in $ S ^ 3 $ if and only if there exist linearly independent $ b_1, b_2 $ in the real 2-dimensional vector space $ \mathcal B_a $ such that
\[
\varphi ^ {(a,\lambda_1,\lambda_2)} (b_1)\in\Z ^ {g +2}\quad\mbox{and}\quad\varphi ^ {(a,\lambda_1,\lambda_2)} (b_2)\in\Z ^ {g +2}.
\]
This is equivalent to the condition that the image of $\varphi ^ {(a,\lambda_1,\lambda_2)} $ is a rational 2-plane in $\R ^ {g +2} $, and this condition is independent of the choice of $\gamma_1,\gamma_2 $.
Let $\Gr ^ {g +2}_2 (\R)$ denote the Grassmanian of 2-planes in $\R ^ {g +2} $. 
\begin {align*}
\Phi ^ {(\lambda_1,\lambda_2)}:\mathcal H ^ g &\rightarrow\Gr ^ {g +2}_2(\R),&
a &\mapsto \varphi ^ {(a,\lambda_1,\lambda_2)} (\mathcal B_a)
\end {align*}
is a smooth map between $2g$-dimensional manifolds.

In Definition~\ref{definition:R} we introduce a subset
$\mathcal R^g\subset\mathcal H^g$ such that
\begin {enumerate}
\item[(I)] $\mathcal R ^ g $  is due to Lemmata~\ref{lemma:dense} and \ref{lemma:non empty} an open and dense subset of $\mathcal H ^ g $.
\item[(II)] For each $ a\in\mathcal R ^ g $ and $\lambda_1\neq\lambda_2\in S ^ 1 $,  the differential $ d\Phi ^ {(\lambda_1,\lambda_2)}_a $ is due to Lemma~\ref {lemma:isomorphism} an isomorphism.
\end {enumerate}
Since $ (a,\lambda_1,\lambda_2) $ corresponds to a CMC torus in $ S ^3 $ if and only if $\Phi ^ {(\lambda_1,\lambda_2)} (a) $ is a rational element of $\Gr ^ {g +2}_2(\R)$, these two statements together with the Inverse Function Theorem prove Theorem~\ref {theorem:sphere}.\end{proof}

In order to prove (I) we first introduce a subvariety $\mathcal S ^ g $ with $\mathcal R ^ g\subset\mathcal H ^g\setminus\mathcal S ^ g$ and show in Lemmata~\ref {lemma:polynomialperturbation} and \ref{lemma:induction} that $\mathcal H ^ g\setminus\mathcal S ^ g $ is open and dense in $\mathcal H ^ g $.
\begin {definition}
Let $ \mathcal  S ^ g $ denote the set of those $ a\in\mathcal H ^ g $ for which linearly independent $ b_1, b_2 \in\mathcal B_a $ have a common root in $\C ^\times $.\end {definition}
To see that this is well defined, independent of the choice of $ b_1$ and $ b_2 $, consider for each $ a\in\mathcal H ^ g $ the meromorphic function on $\C \P ^ 1 $ defined by the quotient $ f (\lambda) =\frac {b_1 (\lambda)} {b_2(\lambda)}$ of two linearly independent elements $ b_1, b_2 $ of $ B_a $. This function is determined up to M\"obius transformations by $ a\in\mathcal H ^ g $ and hence its branch locus is independent of the choice of $ b_1, b_2\in B_a $. The reality condition \eqref{eq:breality} implies that $ f $ maps $\lambda\in S ^ 1 $ into the one-point compactification $\R\P ^ 1 $ of $\R $ in $\C \P ^ 1 $. Note that $ f:\C\P ^ 1\rightarrow\C\P ^ 1 $ has degree $ g +1 $ if and only if $ b_1 $ and $ b_2 $ have no common zeroes, and that the degree of $ f $ is unchanged by M\"obius transformations, so that if there exist linearly independent $ b_1, b_2\in\mathcal H ^ g $ having no common zeros, then this property holds for any pair of linearly independent elements of $\mathcal H ^ g $. 

Denoting the zeros of $ b_k $ by $\beta ^ 1_k,\ldots,\beta ^ {g +1}_k $, we define the {\em discriminant} of $ f $ to be
\[
\Delta=\prod_{j_1 = 1} ^ {g +1}\prod_{j_2 = 1} ^ {g +1} (\beta ^ { j_1}_1 -\beta ^ {j_2}_2).
\]
Then $ \mathcal  S ^ g $ is equivalently the set of those $ a\in\mathcal H ^ g $ for which $\Delta = 0 $.
For a given polynomial $ b $ of degree $ g +1 $ and a local choice of canonical basis $ A_1,\ldots, A_g, B_1,\ldots, B_g $ of $ H_1 (X_a,\Z) $ for all $ a $ in an open neighbourhood of $ a_0\in\mathcal H ^ g $, the periods
\[
\int_{A_1}\Theta_b,\cdots,\int_{A_g}\Theta_b,
\int_{B_1}\Theta_b,\cdots,\int_{B_g}\Theta_b
\]
depend holomorphically on $ a $ in this neighbourhood. This implies that the set $\mathcal S ^ g $ 
 is a real subvariety of $\mathcal H ^ g $.
As $\mathcal H ^ g $ is connected, the subvariety $\mathcal S ^ g $ is either all of $\mathcal H ^ g $, or is the complement of an open and dense subset.

Take $ a\in\mathcal H ^ g\setminus\mathcal S ^ g $ and let $ b_1, b_2\in\mathcal B_a $ be linearly independent. Then there are polynomials $ c_1, c_2 $ of degree at most $ g $ such that
\begin {equation}\label {eq:R}
c_1 b_2 - c_2 b_1 = a\text { or equivalently }\frac {c_1} {b_1} -\frac {c_2} {b_2} =\frac {a} {b_1 b_2},
\end {equation}
and since $ a\notin\mathcal S ^ g $, these polynomials are unique. Note that the reality conditions \eqref {eq:reality} on $ a $ and \eqref {eq:Thetareality} on $ b_1, b_2 $ force $ c_1, c_2 $  to satisfy
\begin {align*}
\lambda ^ {g-1}\overline {c_1 (\bar\lambda^ {- 1})} &= c_1(\lambda),& \lambda ^ {g-1}\overline {c_2 (\bar\lambda^ {- 1})} &= c_2 (\lambda).
\end{align*}
Since $ a $ has degree $ 2g $ and $ b_1b_2 $ has degree $ 2g +2 $, the meromorphic form $\frac {a} {b_1 b_2} d\lambda $ has no residue at $\lambda =\infty $. Hence the following statements are equivalent:
\begin {enumerate}
\item degree$ (c_1)=g $,
\item degree$ (c_2)=g $,
\item $\sum_{\text {roots of } b_1}\Res\,\frac {a} {b_1b_2} d\lambda \not= 0 $,
\item $\sum_{\text {roots of } b_2}\Res\,\frac {a} {b_1b_2} d\lambda \not= 0 $.
\end {enumerate}
\begin {definition}\label {definition:R}
Define $\mathcal R ^ g $ to be the set of $ a\in\mathcal H ^ g\setminus\mathcal S ^ g $ such that the (equivalent) conditions above are satisfied.
\end {definition}
We proceed now to show that this definition is independent of the choice of basis $ b_1, b_2 $ of $\mathcal B_a $. Any other such basis is given by
\[
\tilde b_1 = Ab_1+ Bb_2,\,\tilde b_2 = Cb_1+ Db_2\text { with }\left (
\begin {array} {cc}
A & B\\
C & D\end {array}\right)\in GL (2,\R).
\]
Then defining
\[
\tilde c_1 =\frac {Ac_1+ Bc_2} {AD - BC},\,\tilde c_2 =\frac {Cc_1+ Dc_2} {AD - BC}
\]
we have
\[
\tilde c_1\tilde b_2 -\tilde c_2\tilde b_1 = c_1b_2 - c_2b_1 = a.
\]
Then clearly  $ c_1$ and $ c_2 $ have degree $ g $ if and only if the same is true of $ \tilde c_1 $ and $\tilde c_2 $.
In fact in the sequel it will be useful to have the following slightly stronger statement.
\begin {lemma}\label {lemma:independent} Take $ a\in\mathcal H ^ g\setminus\mathcal S ^ g $.
For any change of basis
\[
\tilde b_1 = Ab_1+ Bb_2,\;\tilde b_2 = Cb_1+ Db_2, \qquad\left (
\begin {array} {cc}
A & B\\
C & D\end {array}\right)\in GL (2,\R)
\]
of $\mathcal B_a $ we have
\[
\sum_{\text {\rm roots of } \tilde b_1}\Res\,\frac {a} {\tilde b_1\tilde b_2} d\lambda 
= \frac {1} {AD - BC}\sum_{\text {\rm roots of } b_1}\Res\,\frac {a} {b_1b_2} d\lambda.
\]
In particular then this quantity is invariant under M\"obius transformations. 
\end {lemma}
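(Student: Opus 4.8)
The plan is to reduce each side of the claimed identity to a ratio of leading coefficients of the associated polynomials, and then read off the transformation law by inspection. First I would use that $a\notin\mathcal S^g$, so the roots of $b_1$ are disjoint from those of $b_2$; at every root of $b_1$ the term $\tfrac{c_2}{b_2}$ in the partial-fraction identity \eqref{eq:R} is then holomorphic and contributes no residue, whence
\[
\sum_{\text{roots of }b_1}\Res\,\frac{a}{b_1b_2}\,d\lambda=\sum_{\text{roots of }b_1}\Res\,\frac{c_1}{b_1}\,d\lambda .
\]
The form $\tfrac{c_1}{b_1}\,d\lambda$ has poles on $\C\P^1$ only at the roots of $b_1$ and possibly at $\infty$, so the residue theorem rewrites this sum as $-\Res_\infty\tfrac{c_1}{b_1}\,d\lambda$. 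Writing $\beta_1$ for the coefficient of $\lambda^{g+1}$ in $b_1$ and $\gamma_1$ for the coefficient of $\lambda^{g}$ in $c_1$ (with $\gamma_1=0$ when $\deg c_1<g$), an expansion at infinity gives $\Res_\infty\tfrac{c_1}{b_1}\,d\lambda=-\gamma_1/\beta_1$, so the left-hand residue sum equals $\gamma_1/\beta_1$.

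The same argument with $b_1$ and $b_2$ interchanged shows the residue sum over the roots of $b_2$ equals $-\gamma_2/\beta_2$, where $\gamma_2,\beta_2$ are the analogous coefficients of $c_2,b_2$. Since $\deg a=2g<2g+2=\deg(b_1b_2)$, the form $\tfrac{a}{b_1b_2}\,d\lambda$ has no residue at infinity, so the residue theorem forces $\gamma_1/\beta_1=\gamma_2/\beta_2$; this refines the equivalence of conditions (3) and (4) above and is the fact that makes the change of basis behave well. Indeed, the polynomials $\tilde c_1,\tilde c_2$ computed before the lemma satisfy $\tilde c_1\tilde b_2-\tilde c_2\tilde b_1=a$ and so are the unique partners of the basis $\tilde b_1,\tilde b_2$, which again have no common root; applying the first paragraph to this basis gives
\[
\sum_{\text{roots of }\tilde b_1}\Res\,\frac{a}{\tilde b_1\tilde b_2}\,d\lambda=\frac{\tilde\gamma_1}{\tilde\beta_1},
\]
with $\tilde\beta_1=A\beta_1+B\beta_2$ the top coefficient of $\tilde b_1=Ab_1+Bb_2$ and $\tilde\gamma_1=(A\gamma_1+B\gamma_2)/(AD-BC)$ the coefficient of $\lambda^g$ in $\tilde c_1=(Ac_1+Bc_2)/(AD-BC)$. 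Substituting $\gamma_2=(\beta_2/\beta_1)\gamma_1$ yields $A\gamma_1+B\gamma_2=(\gamma_1/\beta_1)(A\beta_1+B\beta_2)$, so the factor $A\beta_1+B\beta_2$ cancels and $\tilde\gamma_1/\tilde\beta_1=\tfrac{1}{AD-BC}\,(\gamma_1/\beta_1)$, which is the asserted formula.

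The final M\"obius assertion is then immediate: a change of basis of $\mathcal B_a$ sends $f=b_1/b_2$ to its M\"obius transform $(Af+B)/(Cf+D)$, and every M\"obius transform of $f$ arises this way, so on choosing the representative with $AD-BC=1$ the displayed factor is $1$ and the residue sum attached to $f$ is unchanged. I do not expect any single computation to be hard; the one point requiring care is the bookkeeping of leading coefficients across the change of basis, together with the recognition that the vanishing of the residue at infinity is precisely what yields $\gamma_1/\beta_1=\gamma_2/\beta_2$ --- without that relation the $B$-dependent terms would fail to cancel and no clean transformation law would hold.
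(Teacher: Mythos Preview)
Your argument is correct and follows the same route as the paper: pass from the residue sum to $-\Res_{\infty}\tfrac{c_1}{b_1}\,d\lambda$ via the partial-fraction identity and the Residue Theorem, then compare leading coefficients under the change of basis. The paper records the key middle step simply as ``direct computation''; your version makes explicit that this computation rests on the relation $\gamma_1/\beta_1=\gamma_2/\beta_2$, which is exactly the vanishing of $\Res_\infty\tfrac{a}{b_1b_2}\,d\lambda$ (equivalently, the degree-$2g+1$ coefficient of $c_1b_2-c_2b_1=a$ being zero).
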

\begin {proof} This is a straightforward computation, namely
\begin {align*}
\sum_{\text {roots of } \tilde b_1}\Res\,\frac {a} {\tilde
  b_1\tilde b_2} d\lambda 
& = \sum_{\text {roots of } \tilde b_1}\Res\, \frac {\tilde c_1} {\tilde b_1} d\lambda \\
& = -\underset{\lambda =\infty }{\Res}\,\frac {\tilde c_1} {\tilde b_1}d\lambda\text { (Residue Theorem)}\\
& = \frac {-1} {AD - BC}\,\,\underset{\lambda =\infty }{\Res}\,\frac {c_1} { b_1}d\lambda\text { (direct computation)}\\
& = \frac{1} {AD - BC} \sum_{\text {roots of }  b_1}\Res\, \frac {c_1} {b_1} d\lambda \\
& = \frac {1} {AD - BC}\sum_{\text {roots of } b_1}\Res\,\frac {a} {b_1b_2} d\lambda.
\end {align*}
\end {proof}

\begin {lemma}\label{lemma:isomorphism}
For all $ a\in\mathcal R ^ g $ and $\lambda_1\neq\lambda_2\in S ^ 1 $, the differential $ d\Phi ^ {(\lambda_1,\lambda_2)}_a $ is an isomorphism.
\end {lemma}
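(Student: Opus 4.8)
The plan is to prove that $d\Phi^{(\lambda_1,\lambda_2)}_a$ is injective; since it maps between $2g$-dimensional spaces this is equivalent to the asserted isomorphism. Identifying $T_{\Phi(a)}\Gr^{g+2}_2(\R)$ with $\Hom(\Phi(a),\R^{g+2}/\Phi(a))$, and writing $\Phi(a)=\Span(v_1,v_2)$ with $v_i=\varphi^{(a,\lambda_1,\lambda_2)}(b_i)$ for a basis $b_1,b_2$ of $\mathcal B_a$, the condition $d\Phi_a(\dot a)=0$ says precisely that the variations $\dot v_1,\dot v_2$ again lie in $\Phi(a)$. A tangent vector $\dot a\in T_a\mathcal H^g$ is a polynomial of degree at most $2g$ satisfying the linearisation of \eqref{eq:reality}, and the normalisation $|a_{2g}|=1$ rules out the radial direction, so that $\dot a=2\kappa a$ is tangent to $\mathcal H^g$ only for $\kappa=0$.

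First I would compute $d\Phi_a$ using a single variational formula. Choosing smooth sections $b_i(t)\in\mathcal B_{a+t\dot a}$ and differentiating $y^2=\lambda a$ gives
\[
\delta_{\dot a}\Theta_b=-\tfrac{\dot a}{2a}\,\Theta_b .
\]
Folding the first-order change of $b_i(t)$, corrected by an element of $\mathcal B_a$, into a polynomial $\hat b_i$ satisfying \eqref{eq:breality}, the condition $d\Phi_a(\dot a)=0$ becomes: for $i=1,2$ there exists such a $\hat b_i$ for which the second-kind differential
\[
\tilde\Omega_i:=\Theta_{\hat b_i}-\tfrac{\dot a}{2a}\,\Theta_{b_i}
\]
has vanishing $A_j$- and $B_j$-periods for all $j$ and vanishing periods along $\gamma_1$ and $\gamma_2$. (The open paths $\gamma_l$ can be chosen so that only the integrand varies, so there are no endpoint contributions.)

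Next, since $\tilde\Omega_i$ is anti-invariant under the hyperelliptic involution $\sigma$ and has no residues, vanishing of every $A_j$- and $B_j$-period makes it exact, $\tilde\Omega_i=dh_i$ with $h_i$ an odd meromorphic function; a pole count forces $h_i=s_i/y$ with $\deg s_i\le g+1$. Oddness then turns the two $\gamma_l$-period conditions into the vanishing of $h_i$ at both points of $X_a$ over $\lambda_l$, that is $s_i(\lambda_1)=s_i(\lambda_2)=0$. I expect this reformulation to be the main obstacle: one must differentiate a Grassmannian-valued period map, correctly accounting for the moving basis $b_i(t)$, the open paths $\gamma_l$, and the reality conditions \eqref{eq:reality} and \eqref{eq:breality} throughout.

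Finally I would run the elimination. Rearranging $dh_i=\tilde\Omega_i$ and using $a=c_1b_2-c_2b_1$ together with $\gcd(b_1,b_2)=1$, valid because $a\notin\mathcal S^g$, yields $b_1s_2-b_2s_1=aw$ with $\deg w\le2$, and hence $s_i=b_i\ell-c_iw$ for one common $\ell$ with $\deg\ell\le1$. For each $l$ the two equations $s_i(\lambda_l)=0$ form a homogeneous $2\times2$ system in $(\ell(\lambda_l),w(\lambda_l))$ whose determinant is $-a(\lambda_l)\ne0$, so $\ell(\lambda_l)=w(\lambda_l)=0$; as $\lambda_1\ne\lambda_2$ this gives $\ell\equiv0$ and $w=w_2(\lambda-\lambda_1)(\lambda-\lambda_2)$. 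Here $a\in\mathcal R^g$ enters decisively: because $\deg c_i=g$ its leading coefficient is nonzero, so vanishing of the degree-$(g+2)$ coefficient of $s_i$ relates the top coefficients of $\ell$ and $w$ by a nonzero factor; with $\ell\equiv0$ this forces $w_2=0$, hence $w\equiv0$. Then $s_i\equiv0$, the relation collapses to $\dot a\,b_i=2a\hat b_i$, whence $\hat b_i=\kappa b_i$ and $\dot a=2\kappa a$, and finally $\dot a=0$ by the normalisation.
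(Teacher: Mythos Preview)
Your proof is correct and shares the paper's overall architecture: assume $\dot a\in\ker d\Phi_a$, show that the variation of $\Theta_{b_i}$ is an exact form with primitive $s_i/y$, derive a polynomial identity linking $s_1,s_2,b_1,b_2$ and $a$, and then invoke $\mathcal R^g$ to force everything to vanish. The genuine difference is in how the key divisibility $a\mid(b_1s_2-b_2s_1)$ is obtained. The paper argues geometrically: it writes the spectral curve locally as the zero set of a function $R_\beta(q_1,q_2)$, differentiates in $t$, and concludes that the global form $\dot q_1\,dq_2-\dot q_2\,dq_1$ is holomorphic away from $P_0,P_\infty$, hence equals $Q(\lambda)\,d\lambda/\lambda^2$ with $\deg Q\le 2$. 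You bypass this Whitham-type argument entirely: expanding $d(s_i/y)=\tilde\Omega_i$ gives $2\lambda a\,s_i'-(a+\lambda a')s_i=2a\hat b_i-\dot a\,b_i$, and cross-multiplying the two equations by $b_2,b_1$ and subtracting shows that $(a+\lambda a')(s_1b_2-s_2b_1)$ is divisible by $a$, whence so is $s_1b_2-s_2b_1$ since $a$ has simple nonzero roots. The endgames also differ in bookkeeping: the paper factors $(\lambda-\lambda_1)(\lambda-\lambda_2)$ from the primitive and contradicts the degree-$g$ condition defining $\mathcal R^g$ directly, while you parametrise $s_i=b_i\ell-c_iw$ and solve a $2\times2$ system at each Sym point, using the nonzero leading coefficient of $c_i$ only at the very end. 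Your route is more elementary; the paper's is more conceptual. One point you leave implicit and should state in a full write-up: $\tilde\Omega_i$ has no residues (so that vanishing of all periods really forces exactness) because it is odd under $\sigma$ and all its poles lie at Weierstrass points.
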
 
\begin {proof}
Write $ P ^ {g +1} $ for the space of polynomials of degree $ g+1 $. On a neighbourhood $ U $ of $a\in \mathcal H ^ g $, we may choose a smoothly varying adapted canonical basis for $ H_1 (X_a,\Z) $ and curves $\gamma_1,\gamma_2 $  as in the proof of Theorem~\ref{theorem:sphere} and a smoothly varying basis $ b_1 (a), b_2 (a) $ of $ \mathcal B_a$. These choices define a smooth map
\begin {align*}
\Psi :  U &\rightarrow P ^ {g +1}\times P ^ {g +1},&
a &\mapsto (b_1 (a),\, b_2 (a))
\end {align*}
 such that $\Phi ^ {(\lambda_1,\lambda_2)} $ factors as
\begin{align*}
U \xrightarrow{\Psi}  &\R ^ {g +2}\times\R ^ {g +2} \rightarrow \Gr ^ {g +2}_2(\R),&
a \mapsto &\left(\varphi _{a} (b_1 (a)),\varphi _{a} (b_2 (a))\right) \mapsto\Phi ^ {(\lambda_1,\lambda_2)} (a).
\end{align*}

Suppose $ a_t\in\mathcal H ^ g $ is for $ t\in (-\delta,\delta) $ a smooth family such that $\dot a =\left.\dfrac {d} {dt}a_t\right|_{t =   0} \in T_a\mathcal H ^ g $ is in the kernel of $ d\Phi ^{(\lambda_1,\lambda_2)}_a $. Then we may choose the functions $ b_{1t}=b_1(a_t)$ and $b_{2t}=b_2(a_t)$ so that $ d\Psi_a (\dot a) = 0 $ also. In the following calculation we consider $\lambda$ and $t$ as independent variables. Consequently the polynomials $a_t$, $b_{1t}$ and $b_{2t}$ define functions depending on $\lambda$ and $t$ whilst the solution $y$ of $y^2=\lambda a_t(\lambda)$ may be considered as a two-valued function depending on $\lambda$ and $t$. For such  functions $ f $, $ f' $ denotes the derivative with respect to $\lambda $, whilst $\dot f $ denotes the derivative with respect to $ t $, evaluated at $ t = 0 $. Recalling the multi-valued functions $ q_1, q_2 $ defined in \eqref{eq:diff}, the derivatives $\frac{dq_1}{d\lambda}$ and $\frac{dq_2}{d\lambda}$ are (single-valued) meromorphic functions of $\lambda $ and $ t $ and  $\left.\frac {d} {dt}\right|_{t = 0}\frac{d q_1}{d\lambda}d\lambda$ and $\left.\frac {d} {dt}\right|_{t = 0}\frac{dq_2}{d\lambda}d\lambda $ are meromorphic differentials without residues, whose periods all vanish. Such differentials must be the exterior derivative of a meromorphic function. Hence there exist meromorphic functions $\dot q_1 $ and $\dot q_2 $, such that
\begin {equation}\label{eq:derivativeTheta}
d\dot{q_1}=\left.\frac{d}{dt}\right|_{t=0} \Theta_{b_{1t}}\quad\mbox{and}\quad d\dot{q_2}=\left.\frac{d}{dt}\right|_{t=0}\Theta_{b_{2t}}.
\end {equation}
Then also $\dot q_1,\dot q_2 $ satisfy the reality condition
  $\rho_*(\dot q_j) =-\overline {\dot q_j} $, so there exist
  polynomials $ \hat c_1 $ and  $ \hat c_2 $ of degree $ g +1 $
  such that
\begin {align*}
\dot q_1 &=\frac {i\hat c_1 (\lambda)} {y},&\dot q_2 &=\frac {i\hat c_2 (\lambda)} {y},&\lambda ^ {g+1}\overline {\hat c_1 (\bar\lambda^{- 1})} &=\hat c_1(\lambda),& \lambda ^ {g+1}\overline {\hat c_2 (\bar\lambda^ {- 1})} &=\hat c_2 (\lambda).
\end {align*}
From  \eqref {eq:derivativeTheta} we have
\[
\dfrac {\partial} {\partial\lambda}\frac {i\hat c_1 (\lambda)} {y} =\left.\dfrac {\partial} {\partial t}\frac {b_1 (\lambda)} {y\lambda}\right |_{t = 0}\quad\text {  and }\quad\dfrac {\partial} {\partial\lambda}\frac {i\hat c_2 (\lambda)} {y} =\left.\dfrac {\partial} {\partial t}\frac {b_2 (\lambda)} {y\lambda}\right |_{t = 0},
\]
which imply
\begin {equation} 2\lambda a i\hat c_1' - a i\hat c_1 -\lambda a' i\hat c_1  = 2 a\dot b_1 -\dot ab_1
\label {eq:derivative1}
\end {equation} and
\begin {equation}
2\lambda ai\hat c'_2 - ai\hat c_2 -\lambda a' i\hat c_2  = 2a\dot b_2 -\dot ab_2.\label {eq:derivative2}
\end {equation}
We have assumed that $ b_1 $ and $ b_2 $ have no common root. If $
\hat c_1 $ and $ \hat c_2 $ are both the zero function then the
equations \eqref{eq:derivative1}-\eqref{eq:derivative2} therefore imply that $\dot a $ vanishes at all roots of $ a $. But $ a $ has distinct roots and the highest coefficient has absolute value one. Then \eqref{eq:reality} and \eqref{eq:positivity} uniquely determine $a$ in
terms of its roots, so $\dot a\equiv  0 $. Thus to prove that $\dot a $ is trivial it suffices to show that $ \hat c_1, \hat c_2 $ both vanish identically.

Locally on simply connected subsets $ U_\beta $ of the spectral curve $ X_a $, there are meromorphic functions $ q_1 $ and $ q_2 $, whose exterior derivatives are equal to
\[
dq_1 =\frac {b_1 (\lambda)} {   y}\frac {d\lambda} {\lambda}\quad\text { and } \quad dq_2 =\frac {b_2 (\lambda)} {y}\frac {d\lambda} {\lambda}.
\]
The image of each $ U_\beta $ under the map $ (q_1, q_2) $ is the zero set of a holomorphic function $ (q_1, q_2)\mapsto R_\beta (q_1, q_2) $ on an open subset of $\C \P ^ 1\times\C\P ^ 1 $. Furthermore, every point $x\in X_a$ is contained in such an open neighbourhood $U_\beta\subset X_a$, such that $q_1:U_\beta\to V_\beta$ is a proper $l$-sheeted covering onto an open subset $V_\beta\subset \C \P ^ 1$. Here $l-1$ is the vanishing order of $dq_1$ at the point $x$. Due to \cite[Theorem~8.3]{Forster:81} there exists a unique choice of $R_\beta$ such that $q_2\mapsto R_\beta(q_1,q_2)$ is a polynomial of degree $l$ with highest coefficient one, whose coefficients are meromorphic functions depending on $q_1\in V_\beta$. For smooth families $ a_t\in\mathcal  H ^ g $, these functions $ R_\beta (q_1, q_2) $ depend smoothly on $ a_t \in\mathcal H ^ g $. Hence the $\frac {\partial R_\beta} {\partial t}  (q_1, q_2) $ are also holomorphic functions on open subsets of $\C \P ^ 1\times\C \P ^ 1 $. We have that
\[
0 =\frac {d} {dt} R_\beta (q_1, q_2) =\frac {\partial R} {\partial t} (q_1, q_2) +\frac {d  q_1} {dt}\frac {\partial R } {\partial q_1}(q_1, q_2) +\frac {d  q_2} {dt}\frac {\partial R } {\partial q_2}(q_1, q_2)
\]
and for fixed $ t $,
\[
0 =\frac {\partial R} {\partial q_1} (q_1, q_2) dq_1+\frac {\partial R} {\partial q_2} (q_1, q_2) dq_2.
\]
This gives that
\[
\dot q_1dq_2 -\dot q_2dq_1
=\frac {-\frac {\partial R} {\partial t} (q_1, q_2)} {\frac {\partial R} {\partial q_1} (q_1, q_2)} dq_2=\frac {\frac {\partial R} {\partial t} (q_1, q_2)} {\frac {\partial R} {\partial q_2} (q_1, q_2)} dq_1.
\]

The above equation shows that the meromorphic global differential form $\dot q_1dq_2 -\dot q_2dq_1 $ is holomorphic away from $ P_0 $ and $ P_\infty $. Moreover, it is invariant with respect to the hyperelliptic involution $\sigma(\lambda,y)=(\lambda,-y)$, as  $\sigma ^*(q_1) = - q_1 $ and $\sigma^*(q_2)=-q_2$. Since this form has at most third order poles at $ P_0 $ and $ P_\infty $, we conclude that it may be written as
\begin{equation}
\dot q_1dq_2 -\dot q_2dq_1 =\frac {Q (\lambda) d\lambda} {\lambda ^ 2},\label {eq:Q}
\end {equation}
with $ Q $ a polynomial of degree two. We have assumed that the $ t $-derivatives of $\int_{\gamma_1} dq_1,\int_{\gamma_2} dq_1,\int_{\gamma_1} dq_2,\int_{\gamma_2} dq_2 $ vanish when $t=0$. Thus $\dot q_1 $ and $\dot q_2 $ and $\hat c_1$ and $\hat c_2$ all vanish at $\lambda_1 $ and $\lambda_2 $, so that $ Q (\lambda) = iC (\lambda -\lambda_1) (\lambda -\lambda_2) $ for some $ C\in\C $. We may write $ \hat c_1 (\lambda) =(\lambda -\lambda_1) (\lambda-\lambda_2) c_1 (\lambda) $ and $ \hat c_2 (\lambda) = (\lambda-\lambda_1) (\lambda -\lambda_2) c_2 (\lambda) $ with polynomials $c_1$ and $c_2$ of degree $g-1$. Then as $ b_1 $ and $ b_2 $ do not have a common root, equation \eqref{eq:Q} is equivalent to
\begin {equation}
c_1b_2 - c_2b_1 = C a.\label {eq:c}
\end {equation}
By definition of $\mathcal R^g$ the unique solution to \eqref{eq:c} is that $C$, $ c_1 $ and $ c_2 $ vanish. As shown earlier, this implies $\dot a = 0 $, and so $ d\Phi ^ {(\lambda_1,\lambda_2)}_a $ has trivial kernel.\end{proof}

To  prove (I) and hence complete the proof of  Theorem~\ref{theorem:sphere} it remains
then to show that for each $ g\in\Z^+\cup\{0\} $, $\mathcal R^g$ is dense in $\mathcal H ^ {g }$.

We shall first prove that for each $g\in\Z^+\cup\{0\}$, the subset $\mathcal H^g\setminus\mathcal S^g$ is open and dense in $\mathcal H^g$. Since $\mathcal{S}^g$ is a subvariety, it suffices to prove that it is non-empty for each $g$, which we shall prove by induction. For the unique spectral curve $ y ^ 2 =\lambda $ of genus zero,  linear functions $ b_1, b_2\in\mathcal B_\lambda $ are linearly independent if and only if they have no common roots. Lemmata~\ref {lemma:polynomialperturbation} and \ref{lemma:induction} will yield the induction step.

\begin {lemma}\label {lemma:polynomialperturbation}
Fix $ a\in\mathcal H ^ g $, $ b\in  B_a $, $\alpha\in S ^ 1 $ and a choice of $\sqrt  {\bar\alpha} $. There exists $\epsilon >0 $ such that for each $ t\in (-\epsilon, 0)\cup (0,\epsilon) $, 
the polynomial
\[
a_t (\lambda) = (\lambda -\alpha e ^ { t}) (\bar\alpha\lambda - e ^ {-  t}) a (\lambda)
\]
lies in $ \mathcal H ^ {g +1} $ and for $ t\in (-\epsilon,\epsilon) $ the conditions 
\begin {enumerate}
\item     $ b_0 =\sqrt {\bar\alpha} (\lambda -\alpha) b $,
\item $ b_t (0) = -\alpha\sqrt {\bar\alpha}\, b (0) $
\end {enumerate}
determine a real-analytic family of polynomials $ b_t\in B_{a_t} $.

 Note that {\rm (1)} is equivalent to $\Theta_b =\iota_0 ^*(\Theta_{b_0}) $ where $\iota_0: X_a\rightarrow X_{a_0} $ denotes the normalisation map $ (\lambda, y)\mapsto (\lambda,\sqrt {\bar\alpha} (\lambda - a) y) $.

\end {lemma}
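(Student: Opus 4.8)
The plan is to establish the two assertions separately: that $a_t\in\mathcal H^{g+1}$ for $0<|t|<\epsilon$, and that conditions (1)--(2) determine a real-analytic family $b_t\in\mathcal B_{a_t}$ for $|t|<\epsilon$. For the first, I would expand the new quadratic factor, using $|\alpha|=1$ and $t\in\R$, as $p_t(\lambda)=(\lambda-\alpha e^{t})(\bar\alpha\lambda-e^{-t})=\bar\alpha\lambda^{2}-2\cosh(t)\lambda+\alpha$, and check the degree-two reality relation $\lambda^{2}\overline{p_t(\bar\lambda^{-1})}=p_t(\lambda)$ directly; since $a$ satisfies \eqref{eq:reality} in degree $2g$, the product $a_t=p_t a$ then satisfies it in degree $2(g+1)$, and its leading coefficient $\bar\alpha$ times that of $a$ has modulus one. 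The roots of $p_t$ are $\alpha e^{\pm t}$, which for $t\neq0$ are distinct, lie off $S^{1}$ (one inside, one outside the unit disc), and, after shrinking $\epsilon$, are distinct from the $2g$ roots of $a$; hence $a_t$ has $2(g+1)$ distinct roots. The remaining conditions follow from the factorisation $\lambda^{-(g+1)}a_t=(\lambda^{-1}p_t)(\lambda^{-g}a)$, since $\lambda^{-1}p_t=2\,\Re(\bar\alpha\lambda)-2\cosh t$ is nonvanishing of constant sign on $S^{1}$ for $t\neq0$ while $\lambda^{-g}a>0$ there, giving \eqref{eq:positivity}. At $t=0$ one has $p_0=\bar\alpha(\lambda-\alpha)^{2}$, so $a_0$ acquires an ordinary double point at $\alpha\in S^{1}$.

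For the second assertion, observe first that the reality condition on degree-$(g+2)$ polynomials is independent of $t$, cutting out a fixed real $(g+3)$-dimensional space $V$; for each $t$ we have $\mathcal B_{a_t}=\{b\in V:\int_{A_j^{t}}\Theta_b=0,\ j=1,\dots,g+1\}$, a real $2$-dimensional space, where $A_1^{t},\dots,A_{g+1}^{t}$ is an adapted canonical basis varying real-analytically for $t\neq0$ and $A_{g+1}^{t}$ encircles the pair $\alpha e^{\pm t}$. I would produce $b_t$ as the unique element of $\mathcal B_{a_t}$ with prescribed constant term $b_t(0)=-\alpha\sqrt{\bar\alpha}\,b(0)$, which is exactly condition (2); since fixing the complex constant term imposes two real conditions on the real $2$-dimensional space $\mathcal B_{a_t}$, this determines $b_t$ uniquely provided the evaluation $\mathrm{ev}_0\colon\mathcal B_{a_t}\to\C$ is a real-linear isomorphism, which I would verify at $t=0$ and propagate by continuity. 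To match the prescribed $t=0$ member with (1), note that $b_0=\sqrt{\bar\alpha}(\lambda-\alpha)b$ satisfies $\iota_0^{*}\Theta_{b_0}=\Theta_b$, so its periods over the cycles of $X_a$ agree with those of $\Theta_b$ and are purely imaginary because $b\in\mathcal B_a$; moreover $b_0(0)=-\alpha\sqrt{\bar\alpha}\,b(0)$, matching (2).

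The delicate point is real-analyticity across $t=0$, where $X_{a_t}$ degenerates. For a general $b\in V$ the form $\Theta_b$ has a residue at the node $\alpha$, so $\int_{A_{g+1}^{t}}\Theta_b$ develops a logarithmic singularity as $t\to0$ and the naive period map is not analytic through $t=0$. The resolution, and the purpose of the factor $(\lambda-\alpha)$ in (1), is that $\tfrac{1}{2\pi i}\int_{A_{g+1}^{t}}\Theta_b\to\Res_{\lambda=\alpha}\Theta_b$ by the residue theorem, and this residue is a real-analytic (indeed explicit) function of $b$ and $t$; replacing the pinching period by it yields a defining linear system for $\mathcal B_{a_t}$ that extends real-analytically across $t=0$. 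Since $b_0$ has a zero at the node, $\Theta_{b_0}$ is regular there and its residue vanishes, so $b_0\in\mathcal B_{a_0}$ in this extended sense.

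Granting this, the combined real-analytic map $V\to\R^{g+1}\times\C$ recording the (regularised) period conditions together with $\mathrm{ev}_0$ is invertible at $t=0$ precisely because $b_0$ is the unique element of $\mathcal B_{a_0}$ with the given constant term. The real-analytic implicit function theorem (equivalently Cramer's rule) then furnishes, on a possibly smaller $(-\epsilon,\epsilon)$, a unique real-analytic family $b_t\in\mathcal B_{a_t}$ with $b_t(0)=-\alpha\sqrt{\bar\alpha}\,b(0)$ and $t=0$ member $b_0$. I expect the main obstacle to be exactly this analytic control across the nodal degeneration: isolating the residue at the vanishing cycle so that the period conditions extend real-analytically through $t=0$, together with the verification that $\mathrm{ev}_0$ is an isomorphism at $t=0$ (equivalently, that no nonzero element of $\mathcal B_{a_0}$ vanishes at $\lambda=0$).
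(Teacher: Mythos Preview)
Your outline is correct and reaches the same conclusion as the paper, but you manufacture a difficulty that is not present. The period $\int_{A_{g+1}^{t}}\Theta_b$ does \emph{not} develop a logarithmic singularity as $t\to 0$: it is the $B$-period of the vanishing cycle that blows up like $\log t$, not the $A$-period. Concretely, the paper takes $A_{g+1}$ to be the \emph{fixed} circle $\partial B_\epsilon(\alpha)$ in the $\lambda$-plane, lifted to $X_{a_t}$ via $\iota_t$. On this contour $\lambda a_t(\lambda)$ is nonvanishing for every $|t|<\epsilon$, so a branch of $y$ is a nowhere-zero real-analytic function of $(\lambda,t)$ there, and $\int_{|\lambda-\alpha|=\epsilon}\tfrac{b(\lambda)}{y}\tfrac{d\lambda}{\lambda}$ is manifestly real-analytic in $t$ through $t=0$. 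Your ``regularisation by the residue'' is just the evaluation of this already-analytic function at $t=0$; no separate argument is needed. With that observation the paper's proof is essentially three lines: fix the cycles, note that condition~(2) imposes two real conditions on the two-dimensional $\mathcal B_{a_t}$ and hence determines $b_t$ uniquely, and check that $b_0=\sqrt{\bar\alpha}(\lambda-\alpha)b$ satisfies them because $\Theta_{b_0}$ is regular at $\alpha$. Two side remarks: your positivity computation has the sign reversed, since $\lambda^{-1}p_t(\lambda)=2\Re(\bar\alpha\lambda)-2\cosh t<0$ on $S^1$ for $t\neq 0$ (this is a harmless normalisation issue already visible in the $(-1)^g$ of the paper's product formula for $a$); and you rightly isolate the injectivity of $\mathrm{ev}_0$ on $\mathcal B_{a_t}$, which the paper asserts without comment---your argument that a nonzero $b\in\mathcal B_{a_t}$ with $b(0)=0$ would give a holomorphic differential with vanishing $A$-periods, hence zero, is the correct justification.
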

\begin {proof}

Let $ A_1,\ldots, A_g, B_1,\ldots, B_g $ be an adapted canonical basis of $ H_1 (X_a,\Z) $. For $\epsilon >0 $ sufficiently small, we may take $ A_{g +1}, B_{g +1}\in H_1 (X_{a_t},\Z) $ such that $ A_{g +1} $ coincides with the circle $\partial B_\epsilon (\alpha) $ and $$(\iota_t)_*(A_1),\ldots,(\iota_t)_*(A_g), A_{g +1},(\iota_t)_*(B_1),\ldots ,(\iota_t)_*(B_g), B_{g +1} $$ is an adapted canonical basis of $ H_1 (X_{a_t},\Z) $, where
\begin {align*}
\iota_t:  U\subset X_a &\rightarrow X_{a_t},&
(\lambda, y) &\mapsto (\lambda,\sqrt {(\lambda -\alpha e ^ t) (\bar\alpha\lambda - e ^ {- t}) } y.
\end {align*}
Here we have taken an open set $ U\subset X_a $ containing $ A_1, \ldots , A_g, B_1, \ldots , B_g $ such that $\iota_t |_U $ is a diffeomorphism onto its image.
The two real conditions $ b_t (0) =-\alpha\sqrt {\bar\alpha}\, b (0)$ determine a unique $ b_t\in B_{a_t} $.
We check that $ b_0 $ as defined above lies in $ B_{a_0} $. Indeed
\[
\Theta_{b_0} =
\frac {\sqrt {\bar\alpha} (\lambda -\alpha) b (\lambda)}
 {\sqrt {(\lambda -\alpha) (\bar\alpha\lambda -1)} y}\frac {d\lambda} {\lambda} =\frac {b (\lambda)} {y}\frac {d\lambda} {\lambda}
\]
has no pole at $\lambda =\alpha $, and hence $\int_{A_{g +1}}\Theta_{b_0} = 0 $.
Since $ a_t $ and the curve $ A_{g +1} ^ t $ are real-analytic in $ t \in (-\epsilon,\epsilon)$, so is the family $ b_t $, in the sense that the coefficients of the polynomials $ b_t $ are real-analytic functions on the open interval $ (-\epsilon,\epsilon) $. 
\end {proof}
Note that for $\alpha\not\in S ^ 1 $, the analogous argument does not hold for families with two pairs of additional roots nearby each of $\alpha $ and $\bar\alpha ^{-1} $, since then the additional $ A $-cycles are not well-defined.

For each linearly independent $ b_1, b_2\in B_a $ 
 let $ b_{1t}, b_{2t} $ be families of linearly independent functions in $ B_{a_t} $ defined as in Lemma \ref{lemma:polynomialperturbation}. 
We now consider $ f_t =\frac {b_{1t}} {b_{2t}} $, keeping in mind that the roots of $ df_t $ are independent of the choice of $ b_1, b_2 $ and depend real-analytically on $t\in (-\epsilon,\epsilon)$.

\begin {lemma}
\label{lemma:induction}
Fix $ a\in\mathcal H ^ g\setminus \mathcal S^ g $ and $\alpha\in S ^ 1 $ with $ df_\alpha $ nonsingular. There exists $\epsilon >0 $ such that for $ t\in (-\epsilon, 0)\cup (0,\epsilon) $, setting
\[
a_t (\lambda) = (\lambda -\alpha e ^ { t}) (\bar\alpha\lambda - e ^ {-  t}) a (\lambda)\in\mathcal H ^ {g +1},
\] 
the degree of $ f_t $ satisfies
\[
\deg (f_t) =\deg (f) +1
\]
and moreover in addition to roots nearby those of $ df $, the differential $ df_t $ has two additional roots on $ S ^ 1 $ in a neighbourhood of $\lambda =\alpha $.
\end{lemma}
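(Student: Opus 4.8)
The plan is to track the branch points of $f_t$, i.e.\ the zeros of the Wronskian $W_t=b_{1t}'b_{2t}-b_{1t}b_{2t}'$ (away from common zeros of $b_{1t},b_{2t}$). Two structural features govern the argument. First, the new quadratic factor of $a_t$ is
\[
(\lambda-\alpha e^{t})(\bar\alpha\lambda-e^{-t})=\bar\alpha\lambda^{2}-2\cosh(t)\,\lambda+\alpha,
\]
so $a_t=a_{-t}$; since the branch locus of $f_t$ depends only on $\mathcal B_{a_t}$, and hence only on $a_t$, it is an even function of $t$. Second, the reality condition \eqref{eq:breality} on $b_{1t},b_{2t}$ forces $\lambda^{2g+2}\overline{W_t(\bar\lambda^{-1})}=-W_t(\lambda)$; thus the zero set of $W_t$ is invariant under $\lambda\mapsto\bar\lambda^{-1}$, and the function $r_t(\lambda):=\tfrac{1}{i}\lambda^{-(g+1)}W_t(\lambda)$ is real on $S^1$, its zeros on $S^1$ being precisely the zeros of $W_t$ on $S^1$.

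Next I would analyse $t=0$. By Lemma~\ref{lemma:polynomialperturbation} we have $b_{k,0}=\sqrt{\bar\alpha}\,(\lambda-\alpha)b_k$, and a direct computation gives
\[
W_0=\bar\alpha\,(\lambda-\alpha)^{2}\,W,\qquad W:=b_1'b_2-b_1b_2'.
\]
Since $a\notin\mathcal S^g$ the map $f$ has degree $g+1$ and the zeros of $W$ are the branch points of $f$; nonsingularity of $df_\alpha$ means $W(\alpha)\neq0$, so $\alpha$ is a double zero of $W_0$ while the branch points of $f$ are simple zeros lying away from $\alpha$. The latter persist, by the implicit function theorem, as the simple zeros of $W_t$ nearby the zeros of $df$, while Rouch\'e's theorem shows that $W_t$ has exactly two zeros (with multiplicity) in a fixed small disc about $\alpha$ for all small $t$.

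The heart of the matter is to show that for $t\neq0$ this double zero splits into two \emph{distinct} zeros, both on $S^1$. Writing $\lambda=\alpha e^{i\phi}$ and using $W(\alpha)\neq0$ one gets $r_0(\alpha e^{i\phi})=c\,\phi^{2}+O(\phi^{3})$ with $c\in\R\setminus\{0\}$, and the even dependence of the zero set on $t$ removes the terms odd in $t$, so that near $(\phi,t)=(0,0)$
\[
r_t(\alpha e^{i\phi})=c\,\phi^{2}+d\,t^{2}+(\text{higher order}),\qquad c,d\in\R .
\]
The two zeros then solve $\phi^{2}\approx-d\,t^{2}/c$, and they are real (hence on $S^1$) and distinct for $t\neq0$ precisely when $cd<0$. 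Establishing this sign is the main obstacle: it amounts to computing the second $t$-derivative $\ddot W(\alpha)$, which requires differentiating twice the conditions that pin down $b_{kt}$, namely the reality condition together with the vanishing of the periods of $\Theta_{b_{kt}}$ over the adapted basis of $H_1(X_{a_t},\Z)$ --- in particular over the new cycle $A_{g+1}=\partial B_\epsilon(\alpha)$ produced by opening up the node. The anti-reality $\lambda^{2g+2}\overline{W_t(\bar\lambda^{-1})}=-W_t$ reduces the outcome to a single real sign, which I expect the computation to fix as $cd<0$, uniformly in the sign of $t$ as dictated by the even-ness.

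Granting the split, the remaining conclusions follow at once. Two distinct simple zeros of $W_t$ near $\alpha$ are incompatible with a common zero of $b_{1t}$ and $b_{2t}$ there, since a common zero $\beta$ would contribute a factor $(\lambda-\beta)^2$ to $W_t$ and hence a double zero; away from $\alpha$ the zeros of $b_{1t}$ and $b_{2t}$ are disjoint perturbations of the distinct zeros of $b_1$ and $b_2$. Hence $b_{1t}$ and $b_{2t}$ are coprime, so $\deg f_t=g+2=\deg f+1$, and the two additional zeros of $df_t$ lie on $S^1$ in a neighbourhood of $\lambda=\alpha$, as required.
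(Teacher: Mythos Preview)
Your Wronskian framework is sound and your reductions up to the expansion
\[
r_t(\alpha e^{i\phi})=c\,\phi^{2}+d\,t^{2}+\text{(higher order)}
\]
are correct; the evenness $a_t=a_{-t}$ and the reality $\lambda^{2g+2}\overline{W_t(\bar\lambda^{-1})}=-W_t(\lambda)$ are exactly the right structural inputs. But the proof is incomplete precisely where you say it is: you never establish $cd<0$, and this is not a routine tail-end check. Computing $d$ means computing $\ddot b_{kt}|_{t=0}$, which in turn requires differentiating the vanishing of $\int_{A_{g+1}}\Theta_{b_{kt}}$ twice in $t$ through the degenerating cycle. That is a genuine asymptotic calculation (essentially the blow-up analysis carried out later in the paper for Lemma~\ref{lemma:induction 2}), and until it is done your argument decides neither that the two zeros split nor that they stay on $S^1$. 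Note also that $cd>0$ would still give two distinct zeros (a $\rho$-conjugate pair off $S^1$), so coprimality of $b_{1t},b_{2t}$ and $\deg f_t=\deg f+1$ would survive, but the $S^1$ conclusion would fail; and $d=0$ would leave both conclusions open. So everything really does hinge on that sign.

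The paper avoids this computation entirely by working on the spectral curve rather than on $\C\P^1$. It picks the basis so that $b_1(\alpha)=0$, $b_2(\alpha)\neq0$, uses the vanishing $A_{g+1}$-periods to integrate $\Theta_{b_{1t}},\Theta_{b_{2t}}$ to holomorphic functions $q_{1t},q_{2t}$ on $\lambda^{-1}(B(\alpha,\delta))\subset X_{a_t}$, and observes that $q_{1t}/q_{2t}$ descends to a holomorphic function of $\lambda$ which, at $t=0$, has a \emph{simple} zero at $\alpha$ (l'H\^opital). Hence $d(q_{1t}/q_{2t})\neq0$ near $\alpha$ for small $t$, which immediately rules out a common zero of $b_{1t},b_{2t}$ there and gives $\deg f_t=\deg f+1$. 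The $S^1$ location of the two new branch points is then obtained not by a sign computation but by a topological argument: $f_t$ is real, so the extra sheet of $f_t^{-1}(\R\P^1)$ over the old sheets is $\rho$-invariant, and if the new branch points were off $S^1$ this extra piece of $f_t^{-1}(\R\P^1)\cap S^1$ would be disconnected from the rest of $S^1$, contradicting connectedness of the circle. This sidesteps the second-derivative calculation your approach requires; if you want to complete your route, the blow-up variable $\kappa=(\lambda-\alpha\cosh t)/(i\alpha\sinh t)$ used in the proof of Lemma~\ref{lemma:induction 2} is the tool that makes the local asymptotics of $b_{1t},b_{2t}$ tractable.
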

\begin {proof}
The differential $df_{\alpha}$ is nonsingular and hence choosing $ f (\alpha) =0 $ we see that there exist $ b_1, b_2\in B_a $ such that $ b_1 $ has a simple root at $\lambda = \alpha $ whilst $ b_2 $ is non-vanishing there. We may take $\delta >0 $ such that for $ t\neq 0 $, the curves $ X_{a_t} $ are nonsingular for $\lambda$ in the open ball $B(\alpha,\delta)\subseteq \C$ at $\alpha$ of radius $\delta$. The curve $ X_{a_0} $ has an ordinary double point at $\lambda =\alpha $.
For  small $ t\geq 0 $, let $ b_{1t}, b_{2t} $ be determined as in Lemma~\ref{lemma:polynomialperturbation}. Since the integrals of $ \Theta_{b_{1t}},\Theta_{b_{2t}} $ over $ A_{g +1} $ are zero, we may find holomorphic functions $ q_{1t} $ and $ q_{2t} $ on each $\lambda^ {- 1} (B(\alpha,\delta))\subset X_{a_t} $ such that
\[
dq_{1t} =\frac {b_{1t} (\lambda)} {y_t}\frac {d\lambda} {\lambda}
\quad\text {and }\quad
dq_{2 t} =\frac {b_{2 t} (\lambda)} {y_t}\frac {d\lambda} {\lambda}.
\]
The above condition determines each $ b_{jt} $ up to an additive constant. We may remove this ambiguity  
by assuming  that for $ t\neq 0 $, $ q_{jt} $ vanishes at the roots $
\alpha e ^ {t} $ and $\alpha e ^ {- t} $ of $ a_t = 0 $ and that $
q_{j 0} (\alpha) = 0 $. Furthermore besides these first order roots
$q_{2 t}$ has no other roots on $\lambda^ {- 1} (B(\alpha,\delta))$.
Then  $\frac {q_{1t}} {q_{2t}} $ is a holomorphic function of $\lambda\in B_{\delta} (\alpha)\subset\C $ and a real-analytic function of $ t\in (-\epsilon,\epsilon) $.
Setting $ t = 0 $, by l'H\^opital's rule the function $\frac {q_{10} (\lambda)} {q_{20}(\lambda)} $ has a simple zero at $\lambda =\alpha $, and so we can take $\epsilon >0 $ such that for each $ t\in (-\epsilon,\epsilon) $, the differential $ d\left (\frac {q_{1t}} {q_{2t}}\right) $ is non-vanishing for $\lambda\in B(\alpha,\delta) $.
As
\[
d \left(\frac {q_{1t}} {q_{2t}}\right) = q_{2t}^ {- 1} d q_{1t} - q_{1t} q_{2t} ^ {-2} d q_{2t},
\]
this implies that for $ t\in (-\epsilon, 0)\cup (0,\epsilon) $ the functions $ b_{1t} $ and $ b_{2t} $ have no common zeros on $ B(\alpha,\delta) $. Thus $\deg (f_t) =\deg(f) +1 $. By the Riemann-Hurwitz formula this is equivalent to the statement that $ df_t $ has two more roots than $ df $, and since both $ f $ and $ f_t $ are real, these additional roots are either interchanged or fixed by $\lambda\mapsto\bar{\lambda} ^ {- 1} $, according to whether or not they lie on the unit circle. Suppose that these roots do not reside on the unit circle. Since $ f ^ {- 1} (\R) $ is preserved by the involution $\lambda\mapsto\bar{\lambda}^{-1}$, so is the complement $ f_t^ {- 1} (\R)\setminus f^ {- 1} (\R) $. These fixed points of $\lambda\mapsto\bar{\lambda}^{-1}$ contained in the additional sheet of $\lambda\mapsto f_{t}(\lambda)$ are disconnected from those contained  in the remaining sheets, since there are no new branch points on the unit circle to connect  the two. But this fixed point set is the unit circle, which is connected and we conclude that the additional roots of $ df_t $ indeed lie on $\{\lambda\in\C\mid \left|\lambda\right| = 1\}$.
\end{proof}


Lemma ~\ref{lemma:induction} shows that if $\mathcal H ^ g\setminus\mathcal S ^ g\neq \emptyset $, then also $\mathcal H ^ {g +1}\setminus\mathcal S ^ {g +1}\neq \emptyset $.
 As noted before the $ g = 0 $ case is trivial so for  each $ g\in\Z^+\cup\{0\} $, the complement of the real subvariety $\mathcal S ^ g $ is an open dense subset of $\mathcal H ^ g $. We now proceed to prove the same of $\mathcal R ^ g $.

\begin {lemma}\label {lemma:dense}
If $\mathcal R ^ g $ is non-empty then it is an open and dense subset of $\mathcal H ^ g $.
\end {lemma}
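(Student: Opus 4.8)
The plan is to exhibit $\mathcal H^g\setminus\mathcal R^g$ as a real-analytic subvariety of $\mathcal H^g$. Granting this, the conclusion is immediate and mirrors the argument already used for $\mathcal S^g$: a real-analytic subvariety of a connected real-analytic manifold is either the whole manifold or nowhere dense, and the hypothesis $\mathcal R^g\neq\emptyset$ excludes the first possibility, so $\mathcal H^g\setminus\mathcal R^g$ is closed and nowhere dense and $\mathcal R^g$ is its open dense complement. The only real work is producing the subvariety, and the step I expect to be delicate is that the natural function detecting $\mathcal R^g$, namely the residue sum $\sum_{\text{roots of }b_1}\Res\frac{a}{b_1b_2}d\lambda$, is defined and genuinely singular only on $\mathcal H^g\setminus\mathcal S^g$. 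Since we have not shown that $\mathcal H^g\setminus\mathcal S^g$ is connected, I cannot simply run the identity-theorem argument there; instead I must clear the singularity along $\mathcal S^g$ so as to work on all of the connected manifold $\mathcal H^g$.

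First I would record the local real-analytic structure. Because the periods of $\Theta_b$ depend holomorphically on $a$, the space $\mathcal B_a$ is a real-analytic rank-two bundle over $\mathcal H^g$, so over a neighbourhood of any $a_0$ one may choose a real-analytic frame $b_1(a),b_2(a)$, which we may take of degree exactly $g+1$. On the open dense set $\mathcal H^g\setminus\mathcal S^g$ the polynomials $c_1,c_2$ of \eqref{eq:R} then vary real-analytically, the residue sum equals $c_{1,g}/b_{1,g+1}$ with $c_{1,g}$ the leading coefficient of $c_1$, and $\mathcal R^g$ is the locus where this is non-zero. In particular $\mathcal R^g$ is open, so the content of the lemma is density.

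To extend the condition across $\mathcal S^g$ I would solve $c_1b_2-c_2b_1=a$ by Cramer's rule. Viewing $(c_1,c_2)\mapsto c_1b_2-c_2b_1$ as the Sylvester map on pairs of polynomials of degree at most $g$, its determinant is, up to sign, the resultant $\Res(b_1,b_2)$, which (for a frame of full degree) vanishes exactly on $\mathcal S^g$, while the leading coefficient is $c_{1,g}=N/\Res(b_1,b_2)$, where $N$ is the determinant obtained by substituting the coefficient vector of $a$ into the appropriate column. The key point is that both $N$ and $\Res(b_1,b_2)$ are polynomials in the coefficients of $a,b_1,b_2$, hence real-analytic functions on all of $\mathcal H^g$ with no singularity along $\mathcal S^g$. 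Setting $H:=N\cdot\Res(b_1,b_2)$, tracing the definitions gives, locally,
\[
\mathcal H^g\setminus\mathcal R^g=\{a\in\mathcal H^g: H(a)=0\},
\]
since $H$ vanishes on $\mathcal S^g$ (where the resultant is zero) and, away from $\mathcal S^g$, exactly where $c_{1,g}=0$, that is, where $a\notin\mathcal R^g$.

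Finally, the subset $\mathcal H^g\setminus\mathcal R^g$ is intrinsic, being frame-independent: the degree condition on $c_1$ was shown above to be independent of the chosen basis of $\mathcal B_a$, and Lemma~\ref{lemma:independent} records the precise scaling showing it is even invariant under the M\"obius action. Thus $\mathcal H^g\setminus\mathcal R^g$ is a well-defined subset which, by the previous paragraph, is locally the zero set of a real-analytic function; being locally real-analytic it is a real-analytic subvariety of $\mathcal H^g$. As explained in the first paragraph, connectedness of $\mathcal H^g$ together with $\mathcal R^g\neq\emptyset$ then forces this subvariety to be nowhere dense, so $\mathcal R^g$ is open and dense, completing the proof.
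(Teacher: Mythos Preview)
Your proof is correct and takes a genuinely different route from the paper's. Both arguments must deal with the same obstacle: the residue sum defining $\mathcal R^g$ only makes sense on $\mathcal H^g\setminus\mathcal S^g$, and this complement may be disconnected, so the identity theorem cannot be applied there directly. The paper addresses this by complexifying: in a local complexification $\mathcal H^g_{U^\C}$ the complement of $\mathcal S^g_{U^\C}$ \emph{is} connected, and $(\mathcal H^g\setminus\mathcal S^g)\setminus\mathcal R^g$ is then realised as the real locus of a connected complex subvariety, forcing its (co)dimension to be locally constant; a compactness-of-paths argument then transports nonemptiness of $\mathcal R^g$ from one component of $\mathcal H^g\setminus\mathcal S^g$ to all others. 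Your approach is more elementary and more algebraic: you clear the singularity along $\mathcal S^g$ by writing $c_{1,g}$ via Cramer's rule as $N/\Res(b_1,b_2)$ and then setting $H=N\cdot\Res(b_1,b_2)$, which is real-analytic on all of $\mathcal H^g$ and cuts out exactly $\mathcal H^g\setminus\mathcal R^g$. This lets you run the identity theorem on the connected manifold $\mathcal H^g$ itself, bypassing the component-by-component argument and the complexification entirely. The paper's method is more geometric in flavour and would generalise to situations where no convenient polynomial ``clearing'' is available; your method is shorter here because the Sylvester/resultant formalism fits the problem so well. One small point worth making explicit: every nonzero $b\in\mathcal B_a$ has degree exactly $g+1$ (since otherwise the reality condition forces $b_0=b_{g+1}=0$, making $\Theta_b$ holomorphic with purely imaginary periods, hence zero), so your choice of a degree-$(g+1)$ frame is always possible.
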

\begin {proof}
We have shown in Lemmata~\ref {lemma:polynomialperturbation} and \ref{lemma:induction} that $\mathcal S ^ g $ is a real subvariety of $\mathcal H ^ g $ of co-dimension at least one. The complement of $\mathcal R ^ g $ in $\mathcal H ^ g\setminus\mathcal S ^ g $ is again a real subvariety, and so it suffices to show that if $\mathcal R ^ g $ is nonempty then so is its intersection with every component of $\mathcal H ^ g\setminus\mathcal S ^ g $.

Take $ a_0\in\mathcal H ^ g $.  As observed on page~\pageref {Hg}, each $ a\in \mathcal H ^ {g} $ corresponds to $ g $ pairwise distinct elements $\eta_1,\ldots,\eta_g$ of $\{z\in\C\mid 0 <\left| z\right| <1\}$ and so we can complexify $\mathcal H ^ g $ simply by taking pairwise distinct $\eta_1, \ldots ,\eta_{2g}\in\C ^\times $. In a sufficiently small neighbourhood $ U $ of $ a_0 $ in this complexification, we may choose holomorphically varying homology cycles $ A_1, \ldots , A_g $ and for each $a\in U$ define $\mathcal B_a $ to be the space of polynomials $ b $ of degree $ g +1 $ such that $\int_{A_j}\frac {b (\lambda)} {y}\frac {d\lambda} {\lambda} = 0 $ for $ j = 1,\dots, g $. This gives a natural complexification of $\mathcal S ^ g $ and of $\mathcal R ^ g $ in $ U $. Now writing $\mathcal H ^ g_{U ^\C} $ and so on for these local complexifications, the set $\mathcal H ^ g_{U ^\C}\setminus\mathcal S ^ g_{U ^\C} $ is connected. Thus locally we realise $ (\mathcal H ^ g\setminus\mathcal S ^ g)\setminus\mathcal R ^ g $ as the real points of a connected complex subvariety and hence its dimension is locally constant.

Let $ V $ be any component of $\mathcal H ^ g\setminus\mathcal S ^ g $ and let $ V_0 $ be a component such that $ V_0\cap\mathcal R ^ g\neq\emptyset $. Take a continuous  path $\gamma: [-1, 1]\rightarrow\mathcal H ^ g $ joining these two components, and then since the image of $\gamma $ is compact the co-dimension of $\mathcal R ^ g $ in  $\mathcal H ^ g\setminus \mathcal S ^ g $ is constant along $\gamma $ and hence $ V\cap\mathcal R ^ g\neq\emptyset $. 
\end {proof}

To prove that $\mathcal R ^ g $ is indeed non-empty for each $ g $, we first seek a better understanding of the M\"obius invariant quantity $\sum_{\text {roots of } b_1}\Res\,\frac {a} {b_1b_2} d\lambda $, whose non-vanishing characterises the set $\mathcal R ^ g\subset\mathcal H ^ g\setminus\mathcal S ^ g $.

\begin {lemma}\label{lemma:formula}
Take $ a\in\mathcal H ^ g\setminus\mathcal S ^ g $, a basis $ b_1, b_2 $ of $\mathcal B_a $ and as before set $ f =\frac {b_1} {b_2} $. Then
\begin {enumerate}
\item[(a)]
\begin{equation}\label{eq:zeroset}
\sum_{\text {\rm roots of } b_1}\Res\,\frac {a} {b_1b_2} d\lambda\;\; = \sum_{f^ {- 1} (\{0\})}\frac {a} {b_1' b_2 - b_2' b_1}.
\end{equation}
 
\item[(b)] For all M\"obius transformations 
$\displaystyle {\left ( \begin {array} {cc}
A & B\\
C & D\end {array}\right)\in SL (2,\R)} $, the action 
\[
 \tilde b_1 = Ab_1+ Bb_2,\,\tilde b_2 = Cb_1+ Db_2
\]
leaves invariant the function
\[
\tilde b_1'\tilde b_2 -\tilde b_2'\tilde b_1 = b_1' b_2 - b_2' b_1.
\]
\item[(c)] In particular then the following sum is independent of $ p\in\C\P ^ 1 $

\[
\sum_{f^ {- 1} (\{p\})}\frac {a} {b_1' b_2 - b_2' b_1}.
\]
\end {enumerate}

\end {lemma}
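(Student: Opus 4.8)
The plan is to handle the three parts in order, and to deduce (c) from (a) and (b) together with the M\"obius invariance already recorded in Lemma~\ref{lemma:independent}.

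For (a) I would simply compute residues. Since $a\in\mathcal H^g\setminus\mathcal S^g$, the polynomials $b_1,b_2$ share no root, so each root $\beta$ of $b_1$ has $b_2(\beta)\neq0$ and is a point of $f^{-1}(\{0\})$. At a simple such root $\Res_{\lambda=\beta}\frac{a}{b_1b_2}\,d\lambda=\frac{a(\beta)}{b_1'(\beta)b_2(\beta)}$, and using $b_1(\beta)=0$ to rewrite $b_1'(\beta)b_2(\beta)=(b_1'b_2-b_2'b_1)(\beta)$ turns this into the summand $\frac{a(\beta)}{(b_1'b_2-b_2'b_1)(\beta)}$ appearing on the right of \eqref{eq:zeroset}; summing over the roots of $b_1$ gives the identity. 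Both sides are values of functions depending holomorphically on the configuration of roots, so the identity persists by continuity should $b_1$ acquire a repeated root. Part (b) is the one-line expansion $\tilde b_1'\tilde b_2-\tilde b_2'\tilde b_1=(AD-BC)(b_1'b_2-b_2'b_1)$, in which the $b_1'b_1$ and $b_2'b_2$ terms cancel; for $\mathrm{SL}(2,\R)$ one has $AD-BC=1$, giving the claimed invariance of the Wronskian $W:=b_1'b_2-b_2'b_1$.

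For (c), set $g(p):=\sum_{f^{-1}(\{p\})}\frac{a}{W}$. Because $f$ is determined by $a$ only up to the real changes of basis of (b), post-composing $f$ with $M=\begin{pmatrix}A&B\\C&D\end{pmatrix}\in\mathrm{SL}(2,\R)$ replaces the fibre $f^{-1}(\{0\})$ by $f^{-1}(\{M^{-1}(0)\})$ while, by (b), leaving $W$ unchanged. Combining (a) for the new basis with the M\"obius invariance of Lemma~\ref{lemma:independent} (whose normalising factor $\tfrac{1}{AD-BC}$ equals $1$ here) then gives $g(M^{-1}(0))=g(0)$. As $M$ ranges over $\mathrm{SL}(2,\R)$ the point $M^{-1}(0)$ sweeps out all of $\R\P^1$, so $g$ is constant on $\R\P^1$. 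Finally $g$ is a symmetric rational function of the roots of $b_1-pb_2$, hence a rational function of $p$; a rational function constant on the infinite set $\R\P^1$ is globally constant, which proves (c).

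The main obstacle is exactly this last step: the basis changes that respect the reality condition \eqref{eq:breality} are only \emph{real}, so (a) and (b) by themselves yield constancy of $g$ only along $\R\P^1$, not on all of $\C\P^1$. The point to get right is therefore that $g$ is genuinely rational in $p$, so that constancy on the real locus forces global constancy. Alternatively one can sidestep this by applying the residue theorem directly to the meromorphic form $\frac{a}{(b_1-pb_2)\,b_2}\,d\lambda$ on $\C\P^1$: its residues at the roots of $b_1-pb_2$ reproduce $g(p)$, those at the roots of $b_2$ reproduce $-g(\infty)$, and a degree count shows the contribution at $\lambda=\infty$ vanishes, whence $g(p)=g(\infty)$ for all $p$.
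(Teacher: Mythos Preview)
Your proof is correct and follows essentially the same route as the paper for all three parts: compute residues for (a), expand the Wronskian for (b), and for (c) combine (a), (b) and Lemma~\ref{lemma:independent} to show invariance under the $SL(2,\R)$ change of basis.

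Where your argument actually improves on the paper is precisely the point you flag as the ``main obstacle''. The paper's proof of (c) fixes $p\in\C\P^1$ and chooses $\begin{pmatrix}A&B\\C&D\end{pmatrix}\in SL(2,\R)$ with $\tfrac{Ap+B}{Cp+D}=0$; but such a matrix exists only when $p\in\R\P^1$, so as written the paper establishes constancy only on the real locus (which is all that is subsequently needed in Lemma~\ref{lemma:induction 2}, where only $p=0$ and $p=\infty$ occur). You close this gap by observing that the fibre sum is rational in $p$, so constancy on $\R\P^1$ forces constancy on $\C\P^1$. Your alternative residue-theorem argument applied to $\frac{a}{(b_1-pb_2)b_2}\,d\lambda$ is a clean and more direct proof of (c) that avoids Lemma~\ref{lemma:independent} and the real/complex issue altogether. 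One small remark: you reuse the letter $g$ for the fibre-sum function, which clashes with the genus; rename it before splicing anything in.
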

\begin {proof}
\begin {enumerate}
\item[(a)] Since $ b_1, b_2 $ are assumed to have no roots in common,
\begin {align*}
\sum_{\text {roots of } b_1}\Res\,\frac {a} {b_1b_2} d\lambda \quad& = \sum_{\text {roots of } b_1}\frac {a} {b_1' b_2}\\
& =\sum_{\text {roots of } b_1}\frac {a} {b_1' b_2 - b_2' b_1}\\
& =\sum_{f^ {- 1} (\{0\})}\frac {a} {b_1' b_2 - b_2' b_1}.
\end {align*}
\item[(b)] This statement is verified by a routine computation.
\item[(c)] Fix $ p\in\C\P ^ 1 $ and take a M\"obius transformation $\displaystyle {\left (
\begin {array} {cc}
A & B\\
C & D\end {array}\right)\in SL (2,\R)} $ such that 
\[
 \frac {A p + B} {C p + D} = 0.
\]
Then for 
\[
\tilde f =\frac {\tilde b_1} {\tilde b_2} =\frac {Af + B} {Cf + D} 
\]
we have
\begin {align*}
\sum_{f^ {- 1} (\{p\})}\frac {a} {b_1' b_2 - b_2' b_1}
& =\sum_{\tilde f^ {- 1} (\{0\})}\frac {a} {b_1' b_2 - b_2' b_1}\\
& =\sum_{\tilde f^ {- 1} (\{0\})}\frac {a} {\tilde b_1' \tilde b_2 - \tilde b_2' \tilde b_1}\text { by (b) of this Lemma}\\
& =\sum_{\text {roots of }\tilde  b_1}\Res\,\frac {a} {\tilde b_1\tilde b_2} d\lambda \text { by (a) of this Lemma}\\
& =\sum_{\text {roots of } b_1}\Res\,\frac {a} {b_1b_2} d\lambda \text { by Lemma~\ref {lemma:independent}}\\
& =\sum_{f^ {- 1} (\{0\})}\frac {a} {b_1' b_2 - b_2' b_1}.
\end {align*}
\end {enumerate}
\end {proof}
In order to prove that $\mathcal R ^ g $ is non-empty for each $ g $, we now investigate the behaviour of the quantity $\sum_{\text {roots of } b_1}\Res\,\frac {a} {b_1b_2} d\lambda $ under the process of ``adding a handle", as in Lemma~\ref {lemma:polynomialperturbation}.
\begin {lemma}\label{lemma:induction 2}
Let $ a\in\mathcal H ^ g\setminus\mathcal S ^ g $ and $\alpha\in S ^ 1 $ be such that $ df_\alpha $ is nonsingular, and take a basis $ b_1, b_2 $ of $\mathcal B_a $ with $ b_1 (\alpha) = 0 $. Fixing a choice of $\sqrt {\bar\alpha} $, let the families $ a_t\in\mathcal H ^ {g +1} $ and $ b_{1t}, b_{2t}\in\mathcal B_{a_t} $ be as defined in Lemma~\ref{lemma:polynomialperturbation}. Then for sufficiently small $ |t|\not=0 $, we have that $ a_t\in\mathcal H ^ {g +1}\setminus\mathcal S ^ {g +1} $ and
\[
\lim_{t\rightarrow 0} \sum_{\text {\rm roots of } b_{1 t}}\Res\,\frac {a_t} {b_{1 t} b_{2 t}} d\lambda\quad =\sum_{\text{\rm roots of }  b_1}\Res\,\frac {a} {b_1b_2} d\lambda -2\;\underset{\lambda =\alpha}{\Res}\;\frac {a} {b_1b_2} d\lambda.
\]
\end {lemma}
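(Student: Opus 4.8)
The plan is to realise both sides as sums of residues of the meromorphic differential $\omega_t:=\frac{a_t}{b_{1t}b_{2t}}\,d\lambda$ on $\CP^1$ and to track each residue as the handle closes. Write $R_\alpha:=\Res_{\lambda=\alpha}\frac{a}{b_1b_2}\,d\lambda=\frac{a(\alpha)}{b_1'(\alpha)b_2(\alpha)}$, the simple-pole residue coming from the root of $b_1$ at $\alpha$ (since $df_\alpha$ is nonsingular, $\alpha$ is a simple root of $b_1$ and $b_2(\alpha)\neq0$). The starting observation is the global identity $\omega_0=\frac{a}{b_1b_2}\,d\lambda$: as $b_{j0}=\sqrt{\bar\alpha}(\lambda-\alpha)b_j$ and $a_0=(\lambda-\alpha)(\bar\alpha\lambda-1)a$, and since $|\alpha|=1$ gives $\bar\alpha\lambda-1=\bar\alpha(\lambda-\alpha)$, the spurious factors at $\alpha$ cancel identically. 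I would first use this to dispose of the roots of $b_{1t}$ that stay away from $\alpha$: as $t\to0$ the $g$ roots of $b_{1t}$ near the roots $r_1,\dots,r_g$ of $b_1$ other than $\alpha$ vary continuously, and there $\omega_t\to\omega_0$, so their residues tend to $\sum_{j}\Res_{r_j}\frac{a}{b_1b_2}\,d\lambda=\sum_{\text{roots of }b_1}\Res\frac{a}{b_1b_2}\,d\lambda-R_\alpha$.

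It then remains to analyse the two roots $\beta_1(t),\beta_2(t)$ of $b_{1t}$ that collapse onto $\alpha$ (the double root of $b_{10}$). Near $\alpha$ the only other pole of $\omega_t$ is the simple root $\gamma(t)$ of $b_{2t}$ coming from the factor $(\lambda-\alpha)$ in $b_{20}$; the roots $\alpha e^{\pm t}$ of $a_t$ are zeros, not poles. Integrating over a fixed small circle $|\lambda-\alpha|=\delta$ and letting $t\to0$, uniform convergence $\omega_t\to\omega_0$ on the circle yields
\[
\Res_{\beta_1}\omega_t+\Res_{\beta_2}\omega_t+\Res_{\gamma}\omega_t\;\longrightarrow\;R_\alpha,
\]
the only interior pole of $\omega_0$ being the simple one at $\alpha$. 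Thus the whole problem reduces to computing $\lim_{t\to0}\Res_{\gamma}\omega_t$, for then $\lim(\Res_{\beta_1}+\Res_{\beta_2})\omega_t=R_\alpha-\lim\Res_{\gamma}\omega_t$, and adding the far contribution gives $\lim_t\sum_{\text{roots of }b_{1t}}\Res\omega_t=\sum_{\text{roots of }b_1}\Res\frac{a}{b_1b_2}\,d\lambda-R_\alpha+\bigl(R_\alpha-\lim\Res_{\gamma}\omega_t\bigr)$.

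The heart of the matter is to show $\lim_{t\to0}\Res_{\gamma}\omega_t=2R_\alpha$, which I would establish by rescaling. All of $\beta_1,\beta_2,\gamma,\alpha e^{\pm t}$ approach $\alpha$ at rate $|\lambda-\alpha|\sim t$, so I set $\lambda=\alpha(1+tw)$. In this coordinate the local curve $y^2=\lambda a_t$ converges to the fixed nodal resolution $\tilde y^2=w^2-1$ (branch points $w=\pm1$, images of $\alpha e^{\pm t}$), and expanding $0=\int_{A_{g+1}}\Theta_{b_{jt}}$ order by order in $t$ — equivalently, demanding single-valuedness of the local primitives $\tilde q_j$ with $d\tilde q_j=\tilde b_j(w)(w^2-1)^{-1/2}\,dw$ on this curve — forces the $O(t)$ coefficient of $b_{1t}$ near $\alpha$ to vanish, so that $b_{1t}(\alpha(1+tw))=t^2\tilde b_1(w)+O(t^3)$ with $\tilde b_1$ quadratic, $b_{2t}(\alpha(1+tw))=t\,\tilde b_2(w)+O(t^2)$ with $\tilde b_2$ linear, and $a_t=\alpha a(\alpha)\,t^2(w^2-1)+O(t^3)$. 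Hence $\omega_t$ converges on compacta to
\[
\Omega(w)\,dw=\frac{\alpha^2 a(\alpha)\,(w^2-1)}{\tilde b_1(w)\,\tilde b_2(w)}\,dw ,
\]
and the residues of $\omega_t$ at $\beta_i,\gamma$ converge to those of $\Omega\,dw$; a short computation gives $\Res_{w=\infty}\Omega\,dw=-R_\alpha$, matching the circle computation. The same period conditions force $\tilde b_2$ to vanish at $w=0$ and fix the constant term of $\tilde b_1$ as minus one half of its leading coefficient, and feeding these two relations into $\Omega$ gives $\Res_{w=0}\Omega\,dw=2R_\alpha$. Substituting yields $\lim_t\sum_{\text{roots of }b_{1t}}\Res\omega_t=\sum_{\text{roots of }b_1}\Res\frac{a}{b_1b_2}\,d\lambda-2R_\alpha$, the assertion; that $a_t\in\mathcal H^{g+1}\setminus\mathcal S^{g+1}$ for small $t\neq0$ is exactly Lemmata~\ref{lemma:polynomialperturbation} and \ref{lemma:induction}.

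The main obstacle is this rescaling step: one must justify rigorously that after $\lambda=\alpha(1+tw)$ the differential $\omega_t$ and its residues converge to those of $\Omega\,dw$ (a matched-asymptotics statement at the node), and one must extract the precise relations among the rescaled limits $\tilde b_1,\tilde b_2$ from the handle-period normalisation of Lemma~\ref{lemma:polynomialperturbation}. Everything else is bookkeeping with residues together with the cancellation afforded by $|\alpha|=1$; it is this local analysis at the node, and in particular the factor $2$ produced by the new pole $\gamma$ of $b_{2t}$, that does the real work.
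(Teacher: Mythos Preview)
Your argument is correct and arrives at the same local computation as the paper, but you reach it by a somewhat different route. The paper invokes Lemma~\ref{lemma:formula} to rewrite the residue sum as the M\"obius--invariant expression $\sum_{f^{-1}(\{p\})}\frac{a}{b_1'b_2-b_2'b_1}$ and then passes from $p=0$ to $p=\infty$; this replaces the two collapsing roots of $b_{1t}$ by the single collapsing root $\beta_t$ of $b_{2t}$, so that the difference of sums is manifestly a single term. You bypass Lemma~\ref{lemma:formula} entirely: the identity $\omega_0=\frac{a}{b_1b_2}\,d\lambda$ lets the far roots take care of themselves, and the small--circle argument $\Res_{\beta_1}\omega_t+\Res_{\beta_2}\omega_t+\Res_{\gamma}\omega_t\to R_\alpha$ is exactly what reduces the two near roots of $b_{1t}$ to the one near root of $b_{2t}$. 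From that point the two proofs coincide: your blow--up variable $w$ is the paper's $\kappa$ up to the factor $-i$, and your constraints $\tilde b_2(0)=0$ and $\tilde b_1(0)=-\tfrac12\cdot[\text{leading coeff.}]$ are precisely the paper's limits $\hat b_{2t}\to\kappa$, $\hat b_{1t}\to\kappa^2+\tfrac12$ (after killing the irrelevant linear term), obtained from the same $A_{g+1}$--period condition. Your evaluation $\Res_{w=0}\Omega\,dw=2R_\alpha$ is equivalent to the paper's $\left.\frac{\kappa^2+1}{\kappa^2-1/2}\right|_{\kappa=0}=-2$.

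What your approach buys is economy: you never need the M\"obius--invariance machinery of Lemmata~\ref{lemma:independent} and \ref{lemma:formula}. What the paper's approach buys is that the reduction to a single term is purely algebraic and valid before taking any limit, so no circle integral is needed. One point you flag correctly as the main obstacle deserves a word of care: to extract the relations on $\tilde b_1,\tilde b_2$ from $\int_{A_{g+1}}\Theta_{b_{jt}}=0$, you must first deform $A_{g+1}$ from the fixed circle $|\lambda-\alpha|=\epsilon$ (which in your variable has radius $\epsilon/t\to\infty$) to a cycle $|w|=R$ with $R>1$ fixed; this is legitimate since $\Theta_{b_{jt}}$ is holomorphic in the annulus, and is exactly what the paper does when it represents the cycle by $\kappa\in\partial B(0,2)$.
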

\begin {proof} We shall calculate the difference using
  Lemma~\ref{lemma:formula}
\begin{multline}
\sum_{\text {roots of } b_{1 t}}\Res\,\frac {a_t} {b_{1 t} b_{2 t}}d\lambda\quad - \sum_{\text {roots of } b_1}\Res\,\frac {a} {b_1b_2}d\lambda =\\
\begin{aligned}
&=\sum_{f_t^ {- 1} (\{0\})}\frac {a_t} {b_{1t}' b_{2t} - b_{2t}' b_{1t}}\;\; - \sum_{f^ {- 1} (\{0\})}\frac {a} {b_1' b_2 - b_2' b_1}\\
&=\sum_{f_t^ {- 1} (\{\infty\})}\frac {a_t} {b_{1t}' b_{2t} - b_{2t}'b_{1t}}\;\; - \sum_{f^ {- 1} (\{\infty\})}\frac {a} {b_1' b_2 - b_2'b_1}.\label {eq:summation}
\end{aligned}
\end{multline}

Take pairwise disjoint neighbourhoods $ U_{\beta ^ j} $ of each of the roots $\beta ^ j $  of $ b_2 $ and a neighbourhood $ U_\alpha $ of $\lambda =\alpha $ such that each $U_\alpha\cap U_{\beta^j}=\emptyset$. Then by Lemma~\ref{lemma:polynomialperturbation}, for $ \left|t\right| $ sufficiently small, $ U_\alpha $ contains exactly one root of $ b_{2t} $, and $ U_{\beta ^ j} $ contains exactly as many roots of $ b_{2t} $ (counted with multiplicity) as the multiplicity of the root $\beta ^ j $. We summarise this by saying that
in the limit as $ t\rightarrow 0 $, the polynomial $ b_{2t} $ has $ g
+1 $ roots near the roots of $ b_2 $ and one root near $\lambda
=\alpha $. Hence using \eqref {eq:summation} we have
\[
\lim_{t\rightarrow 0}\left (\sum_{\text {roots of } b_{1t}}\Res\,\frac
    {a_t} {b_{1t}b_{2t}} d\lambda\; -  \sum_{\text {roots of }
    b_1}\Res\,\frac {a} {b_1 b_2} d\lambda\right) = 
\lim_{t\rightarrow 0}\frac {-a_t (\beta_t)} {b_{2t}' (\beta_t) b_{1t} (\beta_t)},
\]
where $\beta_t $ denotes the root of $ b_{2 t} $ near $\alpha $.

In the limit as $ t\rightarrow 0 $, the polynomial $ a_t $ has two roots near $\lambda =\alpha $ and $ 2g $ roots near the roots of $ a $, the polynomial $ b_{1t} $ has two roots near $\lambda =\alpha $ and $ g $ roots near the roots of $ b_1 $ on $\C ^\times\setminus\{\alpha\} $, whilst $ b_{2t} $ has one root nearby $\lambda =\alpha $ and $ g +1 $ near the roots of $ b_1 $. Write
\[
a_t (\lambda) =\hat a_t (\lambda)\check a_t (\lambda),
\quad b_{1t} (\lambda) =\hat b_{1t} (\lambda)\check b_{1t} (\lambda),\quad b_{1t} (\lambda) =\hat b_{1t} (\lambda)\check b_{1t} (\lambda),
\]
where $\hat a_t,\hat b_{1t} $ and $\hat b_{2t} $ are polynomials with roots nearby $\lambda =\alpha $ and highest coefficient equal to 1 and $\check a_t,\check b_{1t} $ and $\check b_{2t} $ are polynomials with no roots in a neighbourhood of $\lambda =\alpha $. Using that $\beta_t $ is a root of $\hat b_{2t} $ we have
\[
\frac {-a_t (\beta_t)} {b_{2t}' (\beta_t) b_{1t} (\beta_t)} =\frac {-\hat a_t (\beta_t)} {\hat b_{2t}' (\beta_t)\hat b_{1t} (\beta_t)}\cdot\frac {\check a_t (\beta_t)} {\check b_{2t} (\beta_t)\check b_{1t} (\beta_t)}.
\]
Now $\lim_{t\rightarrow 0}\beta_t =\alpha $ so by the definition of $\check a_t ,\check b_{1t} $ and $\check b_{2t} $ we see that
\[
\lim\limits_{t\rightarrow 0} \frac{\check a_t (\beta_t)} {\check b_{2t} (\beta_t)\check b_{1t} (\beta_t)} =\frac {a (\alpha)} {b_2 (\alpha) b_1' (\alpha)}.
\]
It remains then to calculate the limit
\[
\lim_{t\rightarrow 0}\frac {-\hat a_t (\beta_t)} {\hat b'_{2t} (\beta_t)\hat b_{1t} (\beta_t)}.
\]
In order to calculate this limit we introduce the variable
\[
\kappa=\frac {\lambda -\alpha\cosh t} {i\alpha\sinh t},
\]
which corresponds to blowing up the curve $ X_{a_0} $ at the singularity $\lambda =\alpha $. 

 Note that writing $\kappa $ as a function of $\lambda $ we have $\kappa(\alpha e ^ t) = i $ whilst $\kappa(\alpha e ^ {- t}) = - i $ and the expression for $\lambda $ in terms of $\kappa $ is 
\[
\lambda (\kappa) = i\alpha\sinh (t) \kappa +\alpha \cosh (t).
\]
For $ t>0 $ sufficiently small we have
\begin{equation}\label{eq:factors}
dq_{1t}= \frac{\check b_{1t}(\lambda)}{\lambda\sqrt{\lambda\check a_{t}}} \cdot \frac {\hat b_{1t}(\lambda)d\lambda}{\sqrt{\hat a_t}}\quad\text{and}\quad dq_{2t}= \frac {\check b_{2t}(\lambda)}{\lambda\sqrt {\lambda\check a_{t}}}\cdot\frac{\hat b_{2t}(\lambda)d\lambda}{\sqrt{\hat a_t}}
\end{equation}
where in each expression the first rational function has no poles near $\lambda =\alpha  $ whilst the remaining factor of the differential has no poles near the roots of $ a $. Up to a constant not depending on $t$ the second factors are equal to
\[
 \frac {\hat b_{1t} (\lambda(\kappa))} {\sqrt {\kappa ^ 2+1}} d\kappa\quad\text{and}\quad
\frac {\hat b_{2t} (\lambda(\kappa))} {\sqrt {\kappa ^ 2+1}} d\kappa.
\]
In the limit $t\rightarrow 0$ the first factors in \eqref{eq:factors} converge on all compact subsets of $\kappa\in\C$ uniformly to nonzero constants, since the parameter $\lambda$ converges on these subsets uniformly to $\alpha$. The integral of $ dq_{1t}, dq_{2t} $ along the additional $ A $-cycle around $\kappa = \pm i $ vanishes. As $t\rightarrow 0$, we may choose these additional cycles to be represented by $\kappa\in\partial B(0,2)$. Therefore for all polynomials $p(\kappa)$, which are $t\rightarrow 0$ limits of $t$-dependent linear combinations of $\hat b_{1t} (\lambda(\kappa))$ and $\hat b_{2t}(\lambda(\kappa))$, the form $\frac {p (\kappa)} {\sqrt {\kappa ^ 2+1}} d\kappa$ has no residue at $\kappa=\infty$. The folllowing differentials have this property:
\[
d (\sqrt {\kappa ^ 2+1}) =\frac {\kappa} {\sqrt  {\kappa ^ 2+1}} d\kappa\quad\text {and}\quad d (\kappa\sqrt {\kappa ^ 2+1}) =\frac {2\kappa ^ 2+1} {\sqrt{ \kappa ^ 2+1}} d\kappa.
\]
Moreover $\kappa $ and $\kappa ^ 2+\frac 12 $ span the vector space of all polynomials $ p(\kappa) $ of degree not larger than two such that $\frac {p   (\kappa)} {\sqrt {\kappa ^ 2+1}} d\kappa$ has no residue at $\kappa=\infty$. Since $\hat b_{2t}$ has degree one and $\hat b_{1t}$ degree two and both have highest coefficient one, the renormalised families have the limits
\begin {equation}\label {eq:t0}
\hspace{-2mm}\lim_{t\rightarrow 0}\frac{\hat b_{2t}(\lambda(\kappa))}{i\alpha\sinh(t)}=\kappa\quad\text{and}\quad\lim_{t\rightarrow 0}\frac{\hat b_{1t}(\lambda(\kappa)) -\hat b_{1t}' (\lambda (0))\hat b_{2t} (\lambda(\kappa))}{(i\alpha\sinh(t))^2}=\kappa^2+\frac 12.
\end{equation}
Here the numerator in the second expression is calculated by observing that it is the unique linear combination of $\hat b_{1t} $ and $\hat b_{2t} $ which has highest coefficient one and whose derivative at $\kappa = 0 $ vanishes. The function under consideration is a rational function of degree two: 
\[
\kappa\mapsto\frac {\hat a_t (\lambda(\kappa))}{\hat b'_{1t}(\lambda(\kappa))\hat b_{2t}(\lambda(\kappa))-\hat b'_{2t}(\lambda(\kappa))\hat b_{1t}(\lambda(\kappa))}.
\]
Since the numerator and the denominator have the same degree and the same highest coefficient we can normalise our three polynomials to each have highest coefficient one without changing the rational function. Moreover this function is not changed by adding to $\hat b_{1t}$ a multiple of $\hat b_{2t}$. Hence we have
\[
\lim_{t\rightarrow 0 }
\frac {\hat a_t (\lambda(\kappa))}{\hat b'_{1t}(\lambda(\kappa))\hat b_{2t}(\lambda(\kappa))-\hat b'_{2t}(\lambda(\kappa))\hat b_{1t}(\lambda(\kappa))}=\frac {\kappa ^ 2+1} {2\kappa\cdot \kappa - (\kappa ^ 2+\frac 12)}.
\]
Now since $\beta_t $ is a root of $\hat b_{2t} $ from \eqref {eq:t0} we see that
\[
\lim_{t\rightarrow 0}
\kappa (\beta_t) =\lim_{t\rightarrow 0}
\frac {\beta_t -\alpha\cosh t} {i\alpha\sinh t} = 0,
\]
so
\[
\lim_{t\rightarrow 0 } \frac {\hat a (\beta_t)} {\hat b_{1t}' (\beta_t)\hat b_{2t} (\beta_t) -\hat b_{2t}' (\beta_t)\hat b_{1t} (\beta_t)} =\left.\frac {\kappa ^ 2+1} {\kappa ^ 2 -\frac 12}\right|_{\kappa = 0} = -2.
\]
Thus
\[
\lim_{t\rightarrow 0}\frac {-a_t (\beta_t)} {b_{2t}' (\beta_t) b_{1t} (\beta_t)} = -2\frac {a (\alpha )} {b_1' (\alpha ) b_2 (\alpha )},
\]
completing the proof of the lemma.
\end {proof}
\begin{lemma}\label{lemma:non empty}
For all $g\in\Z^+\cup\{0\}$ the subset $\mathcal R^g\subset\mathcal H^g$ is non-empty.
\end{lemma}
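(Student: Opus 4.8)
The plan is to argue by induction on $g$, using the ``adding a handle'' construction of Lemma~\ref{lemma:induction 2} as the inductive mechanism. For the base case $g=0$, the space $\mathcal H^0$ consists of the single polynomial $a\equiv 1$, and any two linearly independent $b_1,b_2\in\mathcal B_\lambda$ are linear with no common root; solving $c_1b_2-c_2b_1=1$ then forces $c_1,c_2$ to be nonzero constants (were $c_1=0$ we would have $-c_2b_1=1$ with $b_1$ of degree one, which is impossible), so the defining condition of Definition~\ref{definition:R} holds and $\mathcal R^0\neq\emptyset$.

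For the inductive step I would suppose $\mathcal R^g\neq\emptyset$ and fix $a\in\mathcal R^g$ with a basis $b_1,b_2$ of $\mathcal B_a$. Write $\Sigma=\sum_{\text{roots of }b_1}\Res\frac{a}{b_1b_2}\,d\lambda$, which is nonzero precisely because $a\in\mathcal R^g$. By Lemma~\ref{lemma:induction 2}, adding a handle at a point $\alpha\in S^1$ with $df_\alpha$ nonsingular and $b_1(\alpha)=0$ produces $a_t\in\mathcal H^{g+1}\setminus\mathcal S^{g+1}$ whose associated quantity $\Sigma_t$ satisfies $\lim_{t\to0}\Sigma_t=\Sigma-2\Res_{\lambda=\alpha}\frac{a}{b_1b_2}\,d\lambda$. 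Hence it suffices to produce a single $\alpha\in S^1$, off the branch locus of $f$, for which $\Res_{\lambda=\alpha}\frac{a}{b_1b_2}\,d\lambda\neq\tfrac12\Sigma$: for such $\alpha$ the limit is nonzero, so $\Sigma_t\neq0$ for all small $t\neq0$ and $a_t\in\mathcal R^{g+1}$.

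To analyse this residue I would pass, via Lemma~\ref{lemma:formula}, to the M\"obius-invariant rational function $\phi=\tfrac{a}{b_1'b_2-b_2'b_1}$; when $b_1(\alpha)=0$ one has $\Res_{\lambda=\alpha}\frac{a}{b_1b_2}\,d\lambda=\phi(\alpha)$, and by Lemma~\ref{lemma:formula}(c) the fibre sum $\sum_{f^{-1}(p)}\phi$ equals $\Sigma$ for every $p\in\C\P^1$. If no admissible $\alpha$ existed, then $\phi\equiv\tfrac12\Sigma$ on $S^1$, whence $\phi$ is the constant $\tfrac12\Sigma$ identically; summing over the $g+1$ points of a generic fibre then gives $\Sigma=(g+1)\tfrac12\Sigma$, i.e. $(g-1)\Sigma=0$. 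Since $\Sigma\neq0$, this can only occur when $g=1$, so for every $g\neq1$ an admissible $\alpha$ exists automatically: when $\phi$ is non-constant the equation $\phi=\tfrac12\Sigma$ cuts out finitely many points of $S^1$, and when $\phi$ is constant its value $\tfrac{\Sigma}{g+1}$ already differs from $\tfrac12\Sigma$.

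The one delicate point, which I expect to be the main obstacle, is the transition $g=1\to 2$. Here $\phi$ might be the constant $\tfrac12\Sigma$, in which case every $\alpha$ yields the vanishing limit and the construction fails. I would remove this by noting that $\phi$ is constant exactly when $a$ is proportional to $b_1'b_2-b_2'b_1=b_2^2f'$, i.e. when the $2g$ branch points of $f$ coincide with the roots of $a$; this is a proper real-analytic condition on $\mathcal H^1$. Since $\mathcal R^1$ is open and dense by Lemma~\ref{lemma:dense} (it is non-empty via the already-verified step $g=0\to1$, which falls under the case $g\neq1$ above), I may choose the base point $a\in\mathcal R^1$ so that $\phi$ is non-constant, and then the preceding paragraph supplies the required $\alpha$. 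This gives $\mathcal R^2\neq\emptyset$, after which every remaining step has $g\geq2\neq1$ and the induction proceeds unobstructed.
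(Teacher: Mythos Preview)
Your argument is correct and follows the same overall scheme as the paper --- induction on $g$ via the handle construction and Lemma~\ref{lemma:induction 2} --- but the mechanism of the inductive step is different. The paper never carries the hypothesis $a\in\mathcal R^g$; instead, for each $g\ge1$ it invokes Lemma~\ref{lemma:induction} to produce an $a\in\mathcal H^g\setminus\mathcal S^g$ for which $df$ has zeros on $S^1$. Since $a$ has no zeros on $S^1$ by positivity, $\phi=a/(b_1'b_2-b_2'b_1)$ then has poles on $S^1$ and is automatically non-constant, so $\alpha\mapsto\Sigma-2\phi(\alpha)$ is non-constant and one simply avoids its finitely many zeros. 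This dispenses with your fibre-sum dichotomy and with any special treatment of $g=1$. Your route, by contrast, exploits Lemma~\ref{lemma:formula}(c) to obtain the clean implication ``$\phi$ constant $\Rightarrow(g-1)\Sigma=0$'', which handles every $g\ne1$ uniformly without appealing to the location of the branch points of $f$.

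There is one point you should tighten. In the step $g=1\to2$ you assert that ``$\phi$ constant'' is a \emph{proper} real-analytic condition on $\mathcal H^1$, but you give no witness. The easiest fix is precisely the paper's observation: the curves $a_t\in\mathcal H^1\setminus\mathcal S^1$ obtained from $g=0$ via Lemma~\ref{lemma:induction} have $df_t$ vanishing at two points of $S^1$, hence $b_{1t}'b_{2t}-b_{2t}'b_{1t}$ has zeros on $S^1$ while $a_t$ does not, and $\phi_t$ is non-constant. These very $a_t$ already lie in $\mathcal R^1$ by your $g=0\to1$ computation (the limit is $-\Sigma\ne0$), so you may take one of them directly as the base point for $g=1\to2$; the detour through Lemma~\ref{lemma:dense} then becomes unnecessary.
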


\begin{proof}
For $g=0$ the polynomial $a(\lambda)=1$ and $f$ has degree one so there are no roots of $df$. Therefore the function \eqref{eq:zeroset} does not vanish, and $\mathcal R^0$ is non-empty.

If for the unique $a\in\mathcal H^0$ we add to the corresponding spectral curve $X_a$ a small handle at $\alpha\in S^1$ as in Lemma~\ref{lemma:polynomialperturbation}, then due to Lemma~\ref{lemma:induction 2} in the limit $t\rightarrow 0$ the function \eqref{eq:zeroset} is multiplied by $-1$. This implies that $\mathcal R^1$ is non-empty.

It was shown in Lemma~\ref{lemma:polynomialperturbation} that for each $g\ge 1$ there exists $a\in\mathcal H^g$, such that the corresponding $df$ has roots on $\lambda\in S^1$. Then for such $ a $ the function $\frac {a} {b_1' b_2 - b_2' b_1}$ appearing in \eqref{eq:zeroset}, is not constant on $S^1$. For any $\alpha\in S^1$ we attach a small handle to $ X_a $ at $\alpha $ as in Lemma~\ref{lemma:polynomialperturbation}. By Lemma~\ref{lemma:induction 2}, for almost all $\alpha\in S^1$ in the limit $t\rightarrow 0$ the function \eqref{eq:zeroset} does not vanish. Therefore $\mathcal R^{g+1}$ is non-empty.
\end {proof} 

\bibliographystyle{alpha}



\begin{thebibliography}{EKT}

\bibitem[Bob91]{Bobenko:91}
A.I. Bobenko.
\newblock Surfaces of constant mean curvature and integrable equations.
\newblock {\em Russian Math.\ Surveys}, 46(4):1--45, 1991.

\bibitem[Car07]{Carberry:04}
Emma Carberry.
\newblock Minimal tori in {$S^3$}.
\newblock {\em Pacific J. Math.}, 233(1):41--69, 2007.

\bibitem[CM04]{CM:03}
E.~Carberry and I.~McIntosh.
\newblock Special {L}agrangian ${T}^2$-cones in $\mathbb{C}^3$ exist for all
  spectral genera.
\newblock {\em J. London Math. Soc.}, 69(2):531--544, 2004.

\bibitem[EKT93]{EKT:93}
N.M. Ercolani, H.~Kn{\"o}rrer, and E.~Trubowitz.
\newblock Hyperelliptic curves that generate constant mean curvature tori in
  $\mathbb{R}^3$.
\newblock In {\em Integrable Systems (Luminy 1991)}, volume 115 of {\em Progr.
  Math.}, pages 81--114, 1993.

\bibitem[For81]{Forster:81}
O.~Forster.
\newblock {\em Lectures on Riemann Surfaces}.
\newblock Springer-Verlag, New York, USA, 1981.

\bibitem[Has04]{Haskins:04}
Mark Haskins.
\newblock The geometric complexity of special {L}agrangian {$T\sp 2$}-cones.
\newblock {\em Invent. Math.}, 157(1):11--70, 2004.

\bibitem[Hit90]{Hitchin:90}
N.~Hitchin.
\newblock Harmonic maps from a 2-torus to the 3-sphere.
\newblock {\em J. Differential Geom.}, 31:627--710, 1990.

\bibitem[Jag94]{Jaggy:94}
C.~Jaggy.
\newblock On the classification of constant mean curvature tori in ${R}^3$.
\newblock {\em Comment. Math. Helv.}, 69(4):640--658, 1994.

\bibitem[KS10]{KS:08}
M.~Kilian and M.U. Schmidt.
\newblock On infinitesimal deformations of {CMC} surfaces of finite type in the
  3-sphere.
\newblock {\em OCAMI Stud.}, 3, 2010.

\bibitem[KSS10]{KSS:10}
M.~Kilian, M.U. Schmidt, and N.~Schmitt.
\newblock Flows of constant mean curvature tori in the 3-sphere: the
  equivariant case.
\newblock {\em arXiv:1011.2875v1}, 2010.

\bibitem[PS89]{PS:89}
U.~Pinkall and I.~Sterling.
\newblock On the classification of constant mean curvature tori.
\newblock {\em Annals of Math.}, 130(2):407--451, 1989.

\end{thebibliography}

\end {document}